\newcommand{\tg}{{\ti g}}
\newcommand{\dist}{\mbox{\rm dist}}
\newcommand{\fall}{ \ \ \forall \ \ }
\newcommand{\Riem}{{\rm Riem}}
\newcommand{\vol}{{\rm vol}}
\newcommand{\Rb}{{\rm R}}
\newcommand{\norm}{ \|}
\newcommand{\C}{ \mathbb C}
\newcommand{\curlR}{ {\mathcal R} }
\newcommand{\ep}{\varepsilon}
\newcommand{\grad}{\nabla}
\newcommand{\si}{\sigma}
\newcommand{\parti}[2]{\frac{\partial #1 } {\partial #2} }
\newcommand{\lap}{\Delta}
\newcommand{\boundary}{\partial}
\newcommand{\AVR}{{\rm AVR}}
\newcommand{\on}{\over}
\newcommand{\ti}{\tilde}
\newcommand{\partt}{ {\partial \on {\partial t}} }
\newcommand{\Sc}{ { \rm R }}
\newcommand{\br}{ {\begin{rema}}}
\newcommand{\bl}{ {\begin{lemm}} }
\newcommand{\er} { {\end{rema}}}
\newcommand{\el}{{\end{lemm}} }
\newcommand{\bc}{  {\begin{coro}} }
\newcommand{\ec}{  {\end{coro}} }
\newcommand{\plusminus}{ \plusminus}
\newcommand{\de}{\delta}
\newcommand{\N}{\mathbb{N}}
\newcommand{\R}{\mathbb{R}}
\newcommand{\Ric}{\rm Ric}
\newtheorem{defi}{Definition}[section]
\newtheorem{rema}[defi]{Remark}
\newtheorem{theo}[defi]{Theorem}
\newtheorem{lemm}[defi]{Lemma}
\newtheorem{coro}[defi]{Corollary}
\begin{document}
\title[Expanding solitons coming out of cones]{Expanding solitons with non-negative curvature\\ operator coming out of cones }

%\author{}
% Information for first author
\author{Felix Schulze}
\address{Felix Schulze: 
  Freie Universit\"at Berlin, Arnimallee 3, 
  14195 Berlin, Germany}
% Current address
\curraddr{}
\email{Felix.Schulze@math.fu-berlin.de}

% Information for second author
\author{Miles Simon}
% Address of record for the research reported here
\address{Miles Simon: Universit\"at Magdeburg , Universit\"atsplatz 2,
39106 Magdeburg, Germany}
% Current address
\curraddr{}
\email{msimon@ovgu.de}

% General info
\subjclass[2000]{53C44, 35Bxx}
% 53C44 Geometric evolution equations (mean curvature flow)
% 35Bxx Qualitative properties of solutions
% 35B35 Stability, boundedness

\dedicatory{}

\keywords{Ricci flow, expanding solitons, asymptotic cone.}

\maketitle 
%\vskip  0.05 true in
%\vskip  0.1 true in
 \numberwithin{equation}{section}
\numberwithin{defi}{section}

\maketitle  
\begin{abstract} We consider Ricci flow of
   complete Riemannian manifolds
  which have bounded non-negative curvature  
  operator, non-zero asymptotic volume ratio and no boundary.
 We prove scale invariant estimates for these solutions.
  Using these estimates, we show that there is a limit solution, obtained by  
  scaling down this solution at a fixed point in space. This limit solution  
  is an expanding soliton coming out of the   
  asymptotic cone at infinity. 
\end{abstract}  

\vskip  0.3 true in  

\section{Introduction and statement of results} 

\vskip  0.1 true in  

\noindent Let $(M^n,h)$ be a smooth  
$n$-dimensional, complete, non-compact Riemannian manifold without boundary,
with non-negative curvature  
operator and bounded curvature. In particular $(M,h)$ has  
non-negative sectional curvature and non-negative Ricci  
curvature. Any rescaling of this space also has  
non-negative sectional curvatures, and hence for every  
sequence of scalings  
$(M,c_i h,p_i)$, $c_i \in \R^+$  , $p_i \in M$, 
$i \in \N$, there exists a subsequence  
which converges   in the pointed Gromov Hausdorff  
sense to a metric space $(X,d_X)$ (see Proposition 10.7.1  
in \cite{BuBuIv}) which is a {\it metric space with curvature $\geq 0$}
(see Definition 4.6.2 in \cite{BuBuIv}).  \noindent In the case that
$c_i \rightarrow 0$ and $p_i  
= p$ for all $i \in \N$, the limit is known as $(X,d_X,0)$
{\it the asymptotic cone at infinity} and it is unique: 
see Lemma 3.4 of \cite{GK}. It is the Euclidean cone over a metric space   
$(V,d_V)$ where $(V,d_V)$ is an Alexandrov space of  
curvature bounded from below by one, and $0$ is the tip of the cone: see 
Corollary 3.5  of \cite{GK} for example. 

The Euclidean cone $CV$ over a metric space 
$V$ is homeomorphic to the space $\R^+_0 \times V/  
\sim $ with the quotient topology, where $(r,y) \sim   
(s,x)$ if and only if ($r=  s=  0$) or ($r= 
 s$ and $x= y$). The metric is given by: 
$$d_{CV}((r,x),(s,y)):=  r^2 + s^2 
-2rs\cos(\min(d_V(x,y),\pi)).$$ 
In the case that $CV$ arises as a Gromov Hausdorff limit   
in the setting described above,
and the sequence $(M,c_i h,p_i)$ is {\it non-collapsing},
then $V$ is homeomorphic to $S^{n-1}$. This follows from (unpublished) results
of G. Perelman \cite{PeUn}
as explained and simplified by V. Kapovitch in the paper
\cite{Kap1}. See Appendix \ref{B}. 

{\bf Notation:} A pointed 
sequence $(X_i,d_i,p_i)$ of metric spaces is {\it non-collapsing}
 if $\vol(B_1(p)) \geq \de >0$ for all $p \in X_i$ and some $\de >0$ independent
of $i$.

Our aim is to flow such cones $(CV,d_{CV})$ by Hamilton's Ricci flow
(introduced in \cite{Ha82}). 
We will show that a solution to Ricci flow with 
initial value given by the cone exists, that the 
solution is immediately smooth, and that it is an 
expanding Ricci soliton with non-negative curvature 
operator.
For an interval $I$ (open, closed, half open, finite or infinite length)
and a smooth manifold $M$ without boundary,
a smooth (in space and time) 
family of complete Riemannian metrics $\{g(\cdot,t) \}_{t \in I}$
solves {\bf Ricci flow} if
$ \partt g(t) = - 2\, \Ric(g(t))$ for all $t \in I$.
If $I =(0,T)$, we say $(M,g(t))_{t \in (0,T)}$ is a solution to Ricci flow
with initial value $(M,d_0)$ ($(M,d_0)$ a metric space), if 
 $\lim_{t \searrow 0} (M,d(g(t)))= (M,d_0) $ in the Gromov-Hausdorff sense
($(M,d(g))$ is the metric space associated to $(M,g)$).

\begin{defi} Let $(M,h)$ be a smooth, complete 
  Riemannian manifold without boundary, with non-negative Ricci curvature. The 
  asymptotic volume ratio $\AVR(M,h)$ is $$\AVR(M,h):=  
  \lim_{r \to \infty} \frac{\vol(B_r(x))} { r^n},$$
  where $x$ is an arbitrary point in $M$.
\end{defi} 
\noindent Due to the Bishop Gromov volume comparison 
principle, $\AVR(M,h)$ is well defined for such manifolds, and does
not depend on the point $x$. It also easily follows 
that  \begin{eqnarray} \frac {\vol(B_r(x))} { r^n} \geq 
  \AVR(M,h)\ \ \ \ \forall\ r >0. \end{eqnarray} In particular 
if $V_0:=   \AVR(M,h) >0$, then we have 
\begin{eqnarray} \frac {\vol(B_r(x))} { r^n} \geq V_0 >0 
  \ \  \ \ \forall\ r >0. \end{eqnarray}  
\begin{theo}\label{thm:mainthm} 
  Let $(M,h)$ be a smooth, 
  complete Riemannian manifold without boundary, with
  non-negative, bounded curvature operator and 
  positive asymptotic volume ratio $V_0 :=  \AVR(M,h)>0$. 
  Let $(X,d_X,0)$ be the asymptotic cone at infinity, i.e. the unique 
  Gromov-Hausdorff limit of $(M,c_i 
  h,p_0)$ for any sequence $c_i \to 0$ of positive numbers and any
  base point $p_0 \in M$. Then:
  \begin{itemize}
  \item[(i)] There exists a 
    smooth solution  $(M,g(t))_{t \in 
      [0,\infty)}$ to Ricci flow with $g(0)=  h$. 
  \item[(ii)] Let $g^i(t):= c_i g(t/c_i)$, $i \in \N$,
    be the solutions to Ricci flow obtained by
    rescaling the flows obtained in (i).
    The pointed solutions $(M,g_i(t),p_0)_{ t \in (0,\infty)}$
    converge smoothly, sub-sequentially (in the Hamilton-Cheeger-Gromov 
    sense: see \cite{Ha95a})
    as $i \to \infty$
    to a limit solution $(\tilde X,\tilde{g}(t)_{t \in (0,\infty)},\ti x_0)$.
    This solution  $(\tilde X,\tilde{g}(t)_{t \in (0,\infty)},\tilde x_0)$
    satisfies  $(\tilde{X},d(\tilde{g}(t)),\tilde x_0) \to (X,d_X,0)$ in  the 
    Gromov-Hausdorff sense as $t \searrow 0$, 
  $\text{AVR}(\ti X, \ti g(t)) = \text{AVR}(X,d_X) = \text{AVR}(M,h) \
    \ \forall\ t >0$,
  and  $X$ is homeomorphic to $\ti X$. 
    Hence such a limit solution may be thought
    of as a solution to Ricci flow with initial value $(X,d_X,0)$.
    Furthermore, 
    $(\tilde{X},\tilde{g}(t))_{t \in (0,\infty)}$ is an expanding 
    gradient  soliton with non-negative curvature operator. 
    That is $\tilde{g}(t)=  t (\phi_t)^*\tilde{g}(1)$ and 
    $\tilde{g}(1)$ satisfies \begin{equation} \Ric(\tilde{g}(1)) 
      -(1/2)\tilde{g}(1) + {}^{\ti g(1)}\grad^2 f \end{equation} for 
    some smooth function $f:M \to \R$. 
  \end{itemize}
\end{theo}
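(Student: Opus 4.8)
The plan is to realize the cone $(X,d_X,0)$ as a limit of Ricci flows that are a priori controlled by scale-invariant estimates, and then to upgrade that limit to a gradient soliton using a Perelman-type monotonicity/rigidity argument. The starting point is that $(M,h)$ has bounded non-negative curvature operator, so by Shi's existence theorem there is a complete solution $g(t)$, $t\in[0,\infty)$, with $g(0)=h$, and by Hamilton's maximum principle for the curvature operator this solution retains non-negative curvature operator for all time; moreover the AVR hypothesis is preserved, since AVR is a Gromov--Hausdorff invariant of the asymptotic cone and the asymptotic cone is unchanged along the flow (one can see directly that $\AVR(M,g(t))=\AVR(M,h)$ for all $t$, using that $\vol$ is non-increasing under the flow but the large-scale geometry is asymptotically Euclidean with the same cone). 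This gives part (i). The essential preliminary work, which I expect is the technical core of the paper, is to prove \emph{scale-invariant} a priori estimates for such solutions: bounds of the form $|\Riem|(x,t)\le C/t$ (or at least on suitable balls), together with curvature-decay estimates at spatial infinity, with constants depending only on $n$ and $V_0$. These come from a combination of the non-collapsing hypothesis (forced by $V_0>0$, via Appendix~\ref{B}), local derivative-of-curvature estimates, and the gap/pseudolocality philosophy; in a non-negatively curved non-collapsed setting one expects a uniform $C/t$ bound once one controls the geometry at a single scale.

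With these scale-invariant estimates in hand, consider the rescaled flows $g^i(t)=c_ig(t/c_i)$, $c_i\to 0$, based at $p_0$. Because the estimates are scale-invariant, the rescaled flows satisfy the same curvature and injectivity-radius bounds, uniformly in $i$, on every compact time interval $[\delta,1/\delta]\subset(0,\infty)$ and every ball. Hamilton's compactness theorem for Ricci flow (the Hamilton--Cheeger--Gromov convergence of \cite{Ha95a}) then yields a subsequence converging smoothly to a complete limit flow $(\tilde X,\tilde g(t),\tilde x_0)$, $t\in(0,\infty)$, with non-negative curvature operator and the same scale-invariant bounds, and with $\AVR(\tilde X,\tilde g(t))=V_0$ for all $t$. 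As $t\searrow 0$, $(\tilde X,d(\tilde g(t)),\tilde x_0)$ converges in the pointed Gromov--Hausdorff sense to the asymptotic cone: indeed $(\tilde X,d(\tilde g(t)))$ is, up to controlled error, $(M,c_ih)$ viewed at scale $t$, so sending $t\to 0$ recovers the unique blow-down $(X,d_X,0)$; the homeomorphism $X\cong\tilde X$ follows because $\tilde X$ is a manifold of non-negative curvature with the topology of $M$'s end, which by the Perelman/Kapovitch analysis is that of the cone. This gives part (ii) except for the soliton assertion.

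Finally, to see that $\tilde g(t)$ is an expanding gradient soliton with $\tilde g(t)=t(\phi_t)^*\tilde g(1)$, the idea is a \emph{rigidity} argument: the rescaled family $g^i$ is asymptotically self-similar by construction (scaling down at a point), so the natural monotone quantity along the flow — the forward (expander) analogue of Perelman's reduced volume or the expander entropy $W_+$ — is asymptotically constant on the limit. One computes, using the scale-invariant estimates to justify the integrations by parts on the non-compact limit, that the limit flow is a critical point of the relevant functional; by the equality case of the monotonicity formula this forces the soliton identity $\Ric(\tilde g(1))-\tfrac12\tilde g(1)+{}^{\tilde g(1)}\!\grad^2 f=0$ for some smooth $f$, and then $\tilde g(t)=t(\phi_t)^*\tilde g(1)$ where $\phi_t$ is the flow of $\tfrac1t\grad f$. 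The main obstacle, and where the real care is needed, is twofold: first, establishing the scale-invariant curvature estimates with constants depending only on $n$ and $V_0$ (including decay at spatial infinity, needed both for Hamilton compactness to apply uniformly and for the integrations by parts in the entropy argument to be valid on the complete non-compact limit); and second, making rigorous the passage to the limit in the monotonicity formula and the identification of the equality case in the non-compact setting, i.e.\ ruling out loss of the monotone quantity at spatial infinity.
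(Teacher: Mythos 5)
Your outline of the construction and compactness part matches the paper, but two points need correction or substantial work. First, Shi's theorem does \emph{not} give a solution on $[0,\infty)$: it gives existence only for a time depending on $k_0=\sup_M|\Riem(h)|$, so part (i) is itself a theorem to be proved. The paper's route is exactly through the ``technical core'' you defer: a short-time existence theorem (from \cite{Si6}) in which the existence time and the estimates $(a_t)$--$(d_t)$ -- in particular $\sup_M|\Riem(g(t))|\le K^2/t$ and the distance estimate $d(p,q,s)\ge d(p,q,t)\ge d(p,q,s)-K(\sqrt t-\sqrt s)$ -- depend only on $(n,v_0)$, followed by the scaling trick (flow $c\,g_0$, rescale back, let $c\to 0$) which converts this into long-time existence; constancy of the asymptotic volume ratio is quoted from \cite{Yokota08}, not ``seen directly''. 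Note also that non-collapsing comes from Bishop--Gromov and $\AVR>0$, not from Appendix B, and that the Gromov--Hausdorff convergence of $(\tilde X,d(\tilde g(t)))$ to the cone as $t\searrow 0$ is obtained precisely by passing the estimate $(d_t)$ to the limit, giving $d_X\ge d(\tilde g(t))\ge d_X-K\sqrt t$; the equality $\AVR(\tilde X,\tilde g(t))=\AVR(X,d_X)$ uses Cheeger--Colding volume convergence \cite{ChCo}. You gesture at all of this, but without the quantitative estimates the statement of (i) and the $t\searrow 0$ behaviour in (ii) are not established.

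For the soliton identification you propose a genuinely different route (expander entropy $W_+$/forward reduced volume, equality case of a monotonicity formula), which is the approach of \cite{FeIlNi}, \cite{NiKae} and \cite{Ma}, not of this paper, and as written it has a genuine gap: you assert, but do not prove, that the monotone quantity is finite, passes to the Cheeger--Gromov limit, is constant along the limit flow, and that the rigidity (equality) case holds on a complete non-compact manifold -- exactly the points where loss at spatial infinity and the justification of integrations by parts are the whole difficulty. The paper avoids global functionals entirely by a pointwise argument: Hamilton's trace Harnack gives $\partt\big(t\,\Sc(p_0,g(t))\big)\ge 0$, so $t\,\Sc(p_0,g(t))\to S(p_0)$, and by scale invariance $t\,\Sc(\tilde x_0,\tilde g(t))\equiv S(p_0)$ on the limit; after splitting off a flat $\R^k$ factor via the de Rham argument of Appendix A (needed to have $\Ric>0$ so that $\Ric^{-1}$ exists), the identity $\partt(t\Sc)(\tilde x_0,1)=0$ combined with the Chow--Hamilton linear trace Harnack quantity $Z(Y)\ge 0$ evaluated at $Y=-\tfrac12(\Ric^{-1})\grad\Sc$ forces $\grad\Sc(\tilde x_0,1)=0$ and $Z(Y)(\tilde x_0,1)=0$, an interior minimum; the evolution inequality for $Z(Y)$ and the strong maximum principle then give $Z(Y)\equiv 0$ and hence the expanding gradient soliton equation. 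Your plan also omits this splitting/positivity issue. So either carry out the entropy argument in full (essentially reproving results of \cite{FeIlNi}/\cite{Ma} in this setting), or replace that step by the pointwise Harnack and maximum-principle argument, which is what the paper does.
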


\begin{rema}
  As explained before, in this case 
  $X$ is the Euclidean cone over a metric space $V$,
  and $V$ is homeomorphic to $S^{n-1}$, see Appendix B. Thus $X$ is homeomorphic to $\R^n$ .
\end{rema}
\noindent 
In this paper (and in particular the above theorem) $ 
{}^g \grad^j$ refers to the $j$th covariant derivative 
with respect to $g$.  \noindent In order to prove 
this theorem we require a priori estimates for 
non-collapsed solutions with non-negative bounded curvature 
operator.
This involves proving a refined  version of Lemma 4.3 of \cite{Si6}, 
suited to the current setting, which we now state.

\begin{theo}\label{thm:longexist} 
  Let $(M^n,g_0)$ be a smooth, 
  complete Riemannian manifold without boundary,
  with bounded non-negative curvature 
  operator and $V_0:=   \AVR(M,g_0) >0$. Then there 
  exists a constant $c= c(n,V_0)>0$ and a solution to 
  Ricci-flow $(M,g(t))_{ t \in [0,\infty)}$ with $g(0)=  
  g_0$ such that 
  \begin{equation} 
    \sup_M |\Riem(g(t))| 
    \leq \frac c t\ ,   
  \end{equation}
  for all $t \in (0,\infty)$.
\end{theo}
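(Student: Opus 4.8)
The plan is to produce the long-time solution by a limiting procedure from short-time solutions with good estimates, following the strategy of \cite{Si6} but upgrading the a priori bound from one depending on the (local) geometry to the scale-invariant form $|\Riem(g(t))| \leq c/t$ with $c = c(n,V_0)$. First I would recall that since $(M,g_0)$ has bounded curvature, by Shi's theorem there is a short-time solution $(M,g(t))_{t \in [0,T]}$ with $g(0)=g_0$, and by Hamilton's maximal principle for systems (applied to the curvature operator, using that non-negativity of the curvature operator is preserved in all dimensions) the curvature operator stays non-negative; in particular $\Ric \geq 0$ and the scalar curvature $\Sc \geq 0$ persist along the flow. The AVR hypothesis $V_0>0$ then gives, via Bishop--Gromov applied to $g(t)$ (whose Ricci is still non-negative), a uniform non-collapsing: $\vol_{g(t)}(B^{g(t)}_r(x)) \geq V_0 r^n$ for all $r>0$, $x\in M$, $t \in [0,T]$. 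This non-collapsing, together with non-negative curvature operator, is exactly the input needed for the local scale-invariant curvature estimate.

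The key step is a point-picking / blow-up contradiction argument. Suppose the estimate $\sup_M |\Riem(g(t))| \leq c/t$ fails on $[0,T]$ for every $c$; then one can find, along a sequence, points $(x_i,t_i)$ with $t_i |\Riem(g_i)(x_i,t_i)| \to \infty$ where $g_i$ are the flows under consideration. Using a standard point-selection (e.g.\ Perelman-type or Hamilton's point-picking) one extracts a sequence of pointed parabolic rescalings $(M, Q_i g_i(t_i + \cdot/Q_i), x_i)$ with $Q_i = |\Riem(g_i)(x_i,t_i)|$, which have uniformly bounded curvature on parabolic neighbourhoods of growing size, non-negative curvature operator, and — crucially — are \emph{non-collapsed} with the \emph{same} constant $V_0$, because non-collapsing at all scales is scale invariant and is preserved under the rescaling. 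By the Hamilton--Cheeger--Gromov compactness theorem (\cite{Ha95a}) a subsequence converges smoothly to an ancient (or at least eternal on $(-\infty,0]$) solution $(\tilde M, \tilde g(t), \tilde x_\infty)$ with non-negative bounded curvature operator, non-collapsed, with $|\Riem(\tilde g(0))(\tilde x_\infty)| = 1$. One then derives a contradiction: by the strong maximum principle / Hamilton's classification of such limits (non-negative curvature operator ancient solution, non-collapsed), combined with the fact that the original solution's initial data is \emph{exactly} a rescaling-limit cone — more concretely, the non-collapsed ancient limit with non-negative curvature operator forces, via the trace Harnack inequality (\cite{Ha95a}) and the reduced-volume / asymptotic-volume-ratio rigidity, that the curvature must actually vanish, contradicting $|\Riem| = 1$ at $\tilde x_\infty$. (Alternatively, the contradiction comes directly from a local version: the local curvature estimate of \cite{Si6}, Lemma 4.3, which already gives $|\Riem(g(t))| \leq c(n)/t$ on a ball provided a \emph{lower volume ratio bound} holds on a slightly larger ball; the AVR hypothesis supplies this lower bound on all balls of all radii, hence the estimate holds globally with $c = c(n,V_0)$.)

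Once the estimate $|\Riem(g(t))| \leq c/t$ is established on the maximal interval of existence $[0,T_{\max})$, I would extend the solution to $[0,\infty)$. The point is that the scale-invariant bound, together with the fact that the solution came from \emph{fixed} initial data $g_0$ of bounded curvature, prevents finite-time blow-up: on $[T_{\max}/2, T_{\max})$ the curvature is bounded by $2c/T_{\max}$, and by Shi's estimates all derivatives of curvature are bounded there as well, so the flow extends smoothly past $T_{\max}$, contradicting maximality unless $T_{\max}=\infty$. Thus the solution exists for all $t \in [0,\infty)$ and satisfies $\sup_M |\Riem(g(t))| \leq c/t$ throughout.

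The main obstacle I expect is the blow-up step: one must ensure the limit of the rescalings is a genuine smooth solution (Hamilton--Cheeger--Gromov compactness needs uniform curvature bounds on parabolic cylinders and uniform injectivity-radius lower bounds — the latter coming from non-collapsing plus Cheeger--Gromov--Taylor, which is fine), and then one must rule out a nontrivial non-collapsed ancient solution with non-negative curvature operator and $|\Riem|=1$ somewhere. This is where the delicate input lies — either invoking the local estimate of \cite{Si6} directly (so that the blow-up analysis is replaced by a purely local argument on balls, which is cleaner since then one only needs the AVR hypothesis as a local volume-ratio lower bound), or, if arguing via the global limit, using the Harnack inequality and the fact that $\AVR$ is preserved to force the limit to split off lines / be flat. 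I would pursue the local route, as it makes the dependence $c = c(n,V_0)$ transparent and avoids classifying ancient solutions.
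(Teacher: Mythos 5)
There is a genuine gap, and it sits exactly at the crux of the theorem. You assert that Bishop--Gromov applied to $g(t)$ gives $\vol(B_r(x,t))\geq V_0\,r^n$ for all $r>0$ and all later times $t$. Bishop--Gromov only yields $\vol(B_r(x,t))/r^n\geq \AVR(M,g(t))$; to conclude you would need $\AVR(M,g(t))\geq \AVR(M,g_0)=V_0$, i.e.\ that the asymptotic volume ratio (or at least a uniform local volume lower bound) is \emph{preserved in time} along the flow. This is not a comparison-geometry triviality but a Ricci-flow theorem, and it is precisely the delicate point the paper's argument is built around: in Theorem \ref{Shorttime} the volume bound is propagated by a continuity/bootstrap argument on the maximal interval where $\vol(B_1(x,t))>v_0/2$, combining Lemma 4.3 (curvature bound $c_0/t$ up to a definite time), Lemma 6.1 (distance control) and Corollary 6.2 (volume persistence) of \cite{Si6}; and in Theorem \ref{thm:longtimeexist} the constancy of $\AVR$ is quoted from \cite[Theorem 7]{Yokota08}. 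Both your primary blow-up route and your parenthetical ``local route'' feed the propagated non-collapsing in as an input, so as written both inherit this hole; note also that Lemma 4.3 of \cite{Si6} gives the bound only up to a fixed time scale, so even with the volume bound at all times you still need the rescaling step (apply the lemma to parabolic rescalings, or, as the paper does, flow the rescaled initial data $c_i g_0$ and use scale invariance of $\AVR$ and of the bound $c/t$) to get the estimate for all $t\in(0,\infty)$.

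Your primary route has a second unproved step: the contradiction ``non-collapsed ancient limit with non-negative curvature operator must be flat'' is waved through via ``trace Harnack plus reduced-volume/AVR rigidity.'' That is a substantial theorem (essentially Perelman's vanishing of $\AVR$ for non-flat ancient solutions), and moreover the blow-up limit only inherits $\AVR\geq V_0$ if the time-propagation above is already known, so the argument is circular as stated. The paper avoids any classification of ancient solutions: it proves short-time existence with existence time $T=T(n,v_0)$ depending only on the unit-ball volume bound (Theorem \ref{Shorttime}), rescales the initial metric by $c_i\to 0$ (permissible since $\AVR$ is scale invariant and Bishop--Gromov gives the unit-ball bound for every rescaling of $g_0$ at time zero), un-rescales to get solutions on $[0,T/c_i)$ satisfying the scale-invariant estimate, and passes to a limit with Hamilton--Cheeger--Gromov compactness, using Remark \ref{helpful} to control the curvature near $t=0$ so the limit still has initial data $g_0$. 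Your final step (no finite-time blow-up once $|\Riem(g(t))|\leq c/t$ holds on the maximal interval) is fine; it is the establishment of that estimate on the whole interval, with constant $c(n,V_0)$, that your proposal does not actually secure.
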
      

\noindent It is known that any smooth, open solution to Ricci flow with 
non-negative, bounded curvature operator has constant asymptotic
volume ratio, see \cite[Theorem 7]{Yokota08}. That is, in the above
theorem we have 
$$ V_t:= \textup{AVR}(M,g(t)) = V_0\, ,$$
for all $t\in[0,\infty)$.

After completing this paper a pre-print of E. Cabezas-Rivas and
B. Wilking appeared \cite{CabezasRivasWilking11} where the necessary a priori estimates are shown
to extend our results to open Riemannian manifolds with non-negative
(possibly unbounded)
complex sectional curvature . 

\section{Previous results and structure of the paper}
\noindent The literature that exists on expanding, shrinking and steady 
solitons is vast.
For a very good and current overview of the field, we refer the reader
to the survey paper of H-D.\,Cao \cite{Cao2}.
Here we mention some of the 
results on expanding solitons relevant to the  current setting.

In the paper \cite{Cao3}, the author constructs families of examples
of K\"ahler gradient expanding solitons on $\C^n$.
He also shows that any 
solution to K\"ahler Ricci flow with bounded curvature and which 
\begin{itemize}
\item[(i)] exists for $t \in (0, \infty)$, and
\item[(ii)] has non-negative holomorphic bi-sectional and positive
  Ricci curvature, and 
\item[(iii)] has $t \Sc(\cdot,t) \leq K$ for all $t>0$, and
\item[(iv)] $\sup_{(x,t) \in M \times (0,\infty)} t \Sc(x,t)$ is attained
\end{itemize}
must itself be an expanding K\"ahler gradient soliton.
This result was generalised to the case of Ricci flow with
non-negative curvature operator
by B-L.\,Chen and X-P.\,Zhu in \cite{ChZh} (see Proposition 4.2 there).
That is, if we have a solution
to Ricci flow which satisfies the above with 
condition $(ii)$ replaced by 
$(\ti{ii})$ {\it has non-negative curvature operator and positive
  Ricci curvature},
then the conclusion is, {\it the solution must be
  an expanding gradient soliton}.

Hence, it is natural to look for solutions satisfying
all or some of these conditions, when trying to construct expanding 
solitons with non-negative curvature operator.

Both of these theorems use the linear trace Harnack inequality of B.\,Chow
and R.\,Hamilton (see \cite{ChHa}).

A pre-print of L.\,Ma \cite{Ma} appeared after we had completed
this paper.
The pre-print contains a generalisation of the above 
result of B-L.\,Chen and X-P.\,Zhu \cite{ChZh}. In the paper \cite{Ma},  
the condition
$(\ti{ii})$ above is replaced by the new $(\ti{ii})$
 {\it has non-negative Ricci curvature and is non-collapsed}.
The paper of L.\,Ma uses the $W_+$ functional, which is a generalisation
of Perleman's $W$ energy (see \cite{Pe}).

In the papers \cite{FeIlNi} and \cite{NiKae}
the $W_+$ functional is studied globally and locally. 
$W_+$ is monotone non-decreasing, and constant precisely on expanders
(up to a shift in time). See for example Theorem 1.1 in \cite{FeIlNi} 
and also Corollary 6.9 of \cite{NiKae}, where a similar result
to that of \cite{ChZh} is proved using the $W_+$ functional.

It is known that if an expanding soliton has bounded curvature and
$\Ric > 0$, then $0< \AVR < \infty$.
This was first proved by R.\,Hamilton for $\Ric >0$ (see Proposition
9.46 in the book \cite{ChLuNi}).
In the paper \cite{CaNi} this result was generalised to the
case that the curvature is not necessarily bounded: see proposition
5.1 therein.
Sharper estimates for expanding gradient solitons under weaker assumptions
are also proved there. We refer the reader to that paper for more details.
Similar estimates may also be found in Proposition 4.1 of
the paper \cite{ChTa}.

Note that in our setting, we may assume that $\Ric>0$
after isometrically splitting
off a factor $\R^m$ (see Section \ref{Rescaling} for more details).
Hence, the assumption that the manifolds we consider have $ \AVR >0$
is natural.

Further examples of and estimates on
expanding,  steady and shrinking solitons on $\C^m$ are given
in \cite{FeIlKn}. In particular, they construct an example of a Ricci flow
which starts as a shrinking
soliton (for time less than zero), flows into  a cone at time zero and
then into a smooth expanding soliton (for time bigger than zero). 
They also include a discussion (with justification) on
the desirable properties of a {\it weak Ricci flow}.

The splitting result that we prove in Appendix A is essentially
derived from that of Hamilton in \cite{Ha86} (see also
\cite{Cao}).\\[1ex]
{\bf Structure of the paper:} In chapter three we fix some notation. In chapter
four we  prove a short time existence result for smooth, complete,
non-collapsed Riemannian manifolds with non-negative and bounded
curvature operator. In chapter five we show by a scaling argument that
these conditions actually imply longtime existence if the
asymptotic volume ratio is positive. Furthermore the asymptotic volume
ratio for the so obtained solutions remains constant. By blowing down
such a flow parabolically we prove in chapter six that we obtain a smooth
limiting solution, which evolves out of the asymptotic cone at infinity
of the initial manifold. We furthermore show that this solution
actually is an expanding soliton. In Appendix A we give a proof of a
splitting result, which has its origin in the de Rham Splitting
Theorem. In Appendix B we recall an approximation result from 
V.\,Kapovitch/G.\,Perelman.

\section{ Notation }
\noindent For a smooth Riemannian manifold $(M,g)$, and a           
family  $(M,g(t))_{t \in [0,T)}$ of smooth Riemannian metrics,
we use the notation
\begin{itemize} 
\item $(M,d(g))$ is the metric space associated to the Riemannian manifold
  $(M,g)$,
\item $d \mu_g$ is the volume form of the Riemannian manifold
  $(M,g)$,
\item $d(x,y,t)=  \dist_{g(t)}(x,y) = d(g(t))(x,y)$ is 
  the distance between $x$ and $y$ in $M$ with respect 
  to the metric $g(t)$, 
\item $ {}^g B_r(x)$ is the ball of radius $r$ and centre $x$ 
  measured with respect to 
  $d(g)$, 
\item $B_r(x,t)$ is the ball of 
  radius $r$  and centre $x$ measured with respect to $d(g(t))$, 
\item 
  $\vol(\Omega,g)$ is the volume of $\Omega$ with respect 
  to the metric $g$,
\item $\vol(  {}^g B_r(x))= \vol( {}^g B_r(x),g )$,
\item $\vol( B_r(x,t))= \vol(B_r(x,t),g(t))$,
\item  $\curlR(g)$ is the curvature 
  operator of $g$, 
\item $\Riem(g)$ is the curvature tensor of $g$,
\item $\Ric(g)$ is the Ricci curvature tensor of $g$,
\item $\Sc(g)$ is the scalar curvature of $g$,
\item $\Sc(p,g)$ is the scalar curvature of the metric $g$ at the point $p$.
\item If we write $B_r(x)$ resp. $\vol(B_r(x))$ then we mean
  ${}^g B_r(x)$ resp. $\vol(  {}^g B_r(x))$, where $g$ is a metric which 
  will be clear from the context.
\end{itemize}

\section{ Short time existence }\label{chapbound} \setcounter{equation}{0} 
\setcounter{defi}{0}  \numberwithin{defi}{section}  
\numberwithin{equation}{section} 
\noindent  Let $(M^n,g_0)$ be any 
smooth, complete  manifold with bounded curvature  and 
without boundary. From the results of R.\,Hamilton \cite{Ha82}
and  W-X.\,Shi \cite{Sh}, we know that there exists a 
solution $(M,g(t))_{t \in [0,T)}$ to Ricci flow with 
$g(0)=  g_0$ and $T \geq S(n,k_0)>0$ where $k_0 := \sup_M |\Riem(g_0)|$. 
That is: we can find a solution for a positive amount of time $T$
and $T$ is bounded from below by a constant
depending  on 
$k_0$ and $n$. The results of the paper \cite{Si6} show 
that if the initial manifold is smooth, complete, 
without boundary and has non-negative bounded curvature 
operator and  $\vol(B_1(x,0))  \geq v_0 >0$ for all 
$x \in M$, then  the there exists a solution for a 
time interval $[0,T)$ where $T\geq S(n,v_0)$. Note the 
difference to the results  of Hamilton and Shi: the 
lower bound on the length of the time interval of existence
does not depend on  the constant $k_0 :=  \sup_M |\Riem(g_0)| < \infty$.  Some 
estimates on the evolving curvature were also proved in 
that paper. We state this result here, and give a 
proof using the results of \cite{Si6}. 
\begin{theo}\label{Shorttime} 
  Let $(M,g_0)$ be smooth, 
  complete Riemannian manifold without boundary, with non-negative and bounded 
  curvature operator. Assume also that the manifold is 
  non-collapsed, that is \begin{eqnarray} \vol(B_1(x,0))  \geq 
    v_0 > 0 \fall x \in M. \end{eqnarray}  
  Then there 
  exist constants  $T=  T(n,v_0) >0$ and  $K(n,v_0)$ 
  and a solution to Ricci flow 
  $(M,g(t))_{t \in [0,T)}$ which satisfies 
  \begin{equation}\label{diogo} 
    \begin{split}   
      &(a_t) \ \  \curlR(g(t)) \geq 0, \\   
      &(b_t) \ \  \vol(B_1(x,t)) \geq {v_0 /2},  \\ 
      &(c_t) \ \  \sup_{M} |\Riem(g(t))| \leq K^2/t,  \\      
      &(d_t) \ \  d(p,q,s)  \geq  d(p,q,t)  \geq  
      d(p,q,s) -K      (\sqrt t - \sqrt s)   
    \end{split}
  \end{equation} for all $ x,p,q \in M$, $ 0 
  < s \leq t \in [0,T)$, where $\curlR(g)$ is the 
  curvature operator of $g$.
\end{theo}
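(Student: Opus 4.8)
The plan is to derive this theorem as essentially a restatement of the main results of \cite{Si6}, but with the hypothesis phrased in terms of the volume lower bound $\vol(B_1(x,0))\geq v_0$ rather than a uniform curvature bound. First I would recall that for a \emph{smooth} complete $(M,g_0)$ with bounded curvature, Hamilton--Shi (\cite{Ha82},\cite{Sh}) already furnish a solution $(M,g(t))_{t\in[0,T')}$ with $g(0)=g_0$ for some $T'=S(n,k_0)>0$ depending on $k_0=\sup_M|\Riem(g_0)|$; the whole point is to improve the length of the existence interval and the estimates so that they do not see $k_0$. The core input is Lemma 4.3 (and the surrounding estimates) of \cite{Si6}: under the non-negativity of the curvature operator and the non-collapsing bound $\vol(B_1(x,0))\geq v_0>0$, one obtains a time $T=T(n,v_0)>0$ and a constant $K=K(n,v_0)$ such that the flow exists on $[0,T)$ and satisfies exactly the interior curvature bound $\sup_M|\Riem(g(t))|\leq K^2/t$, together with the preservation of $\curlR(g(t))\geq 0$. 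So the first and main step is simply to invoke these results; there is little genuinely new to prove, only to organise the cited statements.

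Granting $(a_t)$ and $(c_t)$, the remaining two properties follow by standard Ricci-flow arguments that I would spell out briefly. For $(d_t)$: since $\curlR(g(t))\geq 0$ implies $\Ric(g(t))\geq 0$, the metrics are pointwise non-increasing in $t$, i.e.\ $\partial_t g(t)=-2\Ric(g(t))\leq 0$, which gives $d(p,q,t)\leq d(p,q,s)$ for $s\leq t$; the monotonicity of lengths of curves transfers to the distance function. For the lower bound in $(d_t)$ one uses the curvature bound $(c_t)$: along any unit-speed $g(s)$-geodesic from $p$ to $q$, the $t$-derivative of its $g(t)$-length is $-\int \Ric(\gamma',\gamma')\,d\sigma \geq -(n-1)\sup|\Riem|\cdot \mathrm{length}$, but since one only needs the distance estimate for points and geodesics one controls $\tfrac{d}{dt} d(p,q,t)\geq -C(n)\sqrt{\sup_M|\Riem(g(t))|}\geq -C(n)K/\sqrt t$ by the usual Hamilton argument bounding the time-derivative of distance by the curvature along a minimizing geodesic (Perelman's trick with the Laplacian comparison is not even needed here since $\Ric\geq 0$); integrating from $s$ to $t$ gives $d(p,q,t)\geq d(p,q,s)-2C(n)K(\sqrt t-\sqrt s)$, and after renaming $K$ this is $(d_t)$. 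For $(b_t)$: the volume element satisfies $\partial_t d\mu_{g(t)}=-\Sc(g(t))\,d\mu_{g(t)}\leq 0$ since $\Sc\geq 0$, so $\vol(\Omega,g(t))\leq \vol(\Omega,g(0))$ for any fixed $\Omega$; combined with the distance estimate $(d_t)$, which shows $B_1(x,t)\supseteq B_{1-K\sqrt t}(x,0)$, and the Bishop--Gromov-type volume control (using $\Ric\geq 0$ and the initial bound $\vol(B_1(x,0))\geq v_0$), one gets $\vol(B_1(x,t))\geq \vol(B_{1-K\sqrt t}(x,0),g(t))$; a short continuity/smallness argument — shrinking $T=T(n,v_0)$ if necessary so that $K\sqrt T$ is small and the loss in passing from radius $1$ to radius $1-K\sqrt t$ costs at most a factor $2$ by volume comparison — yields $\vol(B_1(x,t))\geq v_0/2$.

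The main obstacle, to the extent there is one, is bookkeeping rather than mathematics: one must check that the constants $T$ and $K$ produced by \cite{Si6} depend only on $n$ and $v_0$ and not on $k_0$, and that all four estimates $(a_t)$--$(d_t)$ hold simultaneously on a single common interval $[0,T)$ — in particular that the shrinking of $T$ needed to secure $(b_t)$ is compatible with the $T$ coming from the existence statement. Since \cite{Si6} is set up precisely to give a $k_0$-independent lower bound on the existence time together with the interior estimate $|\Riem(g(t))|\leq K^2/t$, and since $(b_t)$ and $(d_t)$ are soft consequences of $\Ric\geq 0$ and $(c_t)$, no further analytic input is required; the proof is therefore a careful assembly of \cite{Si6}'s Lemma 4.3 with these elementary monotonicity and comparison arguments.
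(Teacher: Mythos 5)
Your reduction to \cite{Si6} misstates what Lemma 4.3 of that paper provides, and this hides the actual content of the proof. Lemma 4.3 of \cite{Si6} is an a priori estimate: it gives $|\Riem(g(t))|\le c_0(n,v_0)/t$ on a time interval on which the non-collapsing $\vol(B_1(x,t))\ge v_0/2$ is already known to hold \emph{along the flow}, not merely at $t=0$. Consequently the $k_0$-independent existence time $T(n,v_0)$ cannot simply be quoted from it; it has to be produced by a continuity/bootstrap argument, which is exactly what the paper does: start the flow with Shi, let $T_M$ be the maximal time up to which $\inf_{x}\vol(B_1(x,t))> v_0/2$, apply Lemma 4.3 and Lemma 6.1 of \cite{Si6} on $[0,T_M)$ to obtain $(c_t)$ and $(d_t)$ there, and then invoke Corollary 6.2 of \cite{Si6} to conclude that in fact $\vol(B_1(x,t))>2v_0/3$ for $t<S(n,v_0)$, so that $T_M\ge S$ by contradiction (otherwise the defining condition of $T_M$ would fail at a time where the volume is still strictly above $v_0/2$). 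In your write-up the propagation of the volume bound is precisely the step you declare ``soft'', and that is where the gap lies.

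Concretely, your argument for $(b_t)$ does not close. Monotonicity of the volume form under $\Sc\ge 0$ gives $\vol(\Omega,g(t))\le \vol(\Omega,g(0))$, an \emph{upper} bound, whereas what you need is a lower bound for $\vol(B_{1-K\sqrt t}(x,0),g(t))$ in terms of initial data. The only quantitative control available is $\Sc(g(\tau))\le c(n)K^2/\tau$, and since $\int_0^t \tau^{-1}\,d\tau$ diverges, integrating the evolution of $d\mu$ only yields a factor of the form $(s/t)^{c(n)K^2}$, which degenerates as $s\to 0$; Bishop--Gromov at time $0$ merely compares the radii $1$ and $1-K\sqrt t$ and says nothing about the loss of volume in time. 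This volume persistence is the genuine (non-soft) estimate, namely Corollary 6.2 of \cite{Si6}, and it is needed \emph{inside} the bootstrap, before one has $(c_t)$ on a definite time interval depending only on $(n,v_0)$. A smaller inaccuracy: the additive estimate in $(d_t)$ with error $K(\sqrt t-\sqrt s)$ requires the localized Hamilton/Perelman estimate $\frac{d}{dt}d\ge -c(n)\bigl(Kr_0+r_0^{-1}\bigr)$ with $r_0\sim\sqrt t/K$; the naive bound by $-\int_\gamma \Ric(\gamma',\gamma')$ along a minimizing geodesic gives only a multiplicative decay rate $\sim K^2/t$, which is not integrable down to $t=0$, so your parenthetical claim that the localization trick is not needed is incorrect (the paper simply cites Lemma 6.1 of \cite{Si6} for this step).
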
  
\begin{proof} 
  The proof follows from the results contained in the 
  paper \cite{Si6} and some other well known facts about 
  Ricci flow. Using the result of \cite{Sh}, we obtain a 
  maximal solution $(M,g(t))_{t \in [0,T_\text{max})}$ to Ricci flow 
  with $g(0)=  g_0$, where $T_\text{max}>0$ and $\sup_M 
  |\Riem(g(t))| < \infty$ for all $t \in [0,T_\text{max})$ and 
  $\lim_{t \nearrow T_\text{max}} \sup_M |\Riem(g(t))|=  \infty$ if 
  $T_\text{max} < \infty$. 
  Also the curvature operator of the solution is  
  non-negative at each time, since non-negative curvature 
  operator is preserved for solutions with bounded curvature 
  due to the maximum principle: see \cite{Ha95} and for 
  example the argument in Lemma 5.1 of \cite{Si6} (the 
  argument there shows that the maximum principle is 
  applicable to this non-compact setting, 
  in view of the fact that the curvature  is bounded. Maximum principles of 
  this sort are well known: see for example \cite{EH} or 
  \cite{NiTa2}). Now the result follows essentially by 
  following the proof of Theorem 6.1 in \cite{Si4}. For 
  convenience we sketch the argument here, and refer the 
  reader to the proof there for more details.  \noindent 
  Let $[0,T_M)$ be the maximal time interval for which 
  the flow exists and 
  \begin{eqnarray}
    \inf_{x \in M} \vol(B_1(x,t)) & > & {v_0 \on 2}, \label{maximal1} 
  \end{eqnarray} 
  for all $t \in [0,T_M)$. Using the 
  maximum principle and standard ODE estimates, one shows 
  easily that $T_M>0$ (see the proof of Theorem 7.1 in 
  \cite{Si6} for details). The aim is now to show that 
  $T_M \geq S$ for some $S=  S(n,V_0)>0$.  From 
  Lemma 4.3 of \cite{Si6} we see that if $T_M \geq 1$ 
  then the estimates  $(a_t)$, $(b_t)$ and $(c_t)$ are 
  satisfied for all $t \leq 1$, and $(d_t)$ would then follow
  from Lemma 6.1 of \cite{Si6}, and hence we would be finished. 
  So w.l.o.g. $T_M \leq 1$. From Lemma 4.3 of \cite{Si6} once again, 
  \begin{equation}\label{maximal2}
    |\Riem(g(t)| \leq {c_0(n,v_0)  
      \on t} 
  \end{equation} 
  for all $t \in (0,T_M)$ for some 
  $c_0=  c_0(n,v_0) < \infty$. First note that $(d_t)$ 
  holds on the interval $(0,T_M)$ in view of  Lemma 6.1 
  in \cite{Si6}, and the fact that  $\Ric(g(t)) \geq 
  0$. Using Corollary 6.2 of  \cite{Si6}, we see that 
  there exists an  $S=  
  S(v_0,c_0(v_0,n))=  S(n,v_0)>0$, such that $\vol(B_1(x,t)) 
  > {2v_0/3}$ for all  $t \in [0,T_M) \cap 
  [0,S)$. If $T_M < S$, then we obtain a contradiction 
  to the definition of $T_M$ ($T_M$ 
  is the first time where the condition  (\ref{maximal1}) is 
  violated). Hence $T_M \geq S$. But then we may use 
  Lemma 4.3, Lemma 6.1 of  \cite{Si6} to show that 
  $(a_t) , (b_t), (c_t)$ and $(d_t)$ are satisfied on $(0,S)$, as 
  required. 
\end{proof}   

\begin{rema}\label{helpful}
  Note that $T_\text{max} \geq (U(n) / k_0)$ and 
  $\sup_{M} |\Riem(g(t))| \leq k_0 \ti k(n)$ for
  all $t \leq (U(n) / k_0)$ for our solution,
  where $k_0 := \sup_{x \in M} |\Riem(g_0)| < \infty$ and $\ti k(n),U(n)> 0$
  are constants. This is due to the fact
  that our solution is constructed by extending a Shi solution,
  and the solutions of Shi satisfy such  estimates by scaling.
\end{rema}

\section{Long time existence and estimates} 
\setcounter{equation}{0}                 
                        
\noindent The long time existence result follows 
essentially from scaling. 
\begin{theo}\label{thm:longtimeexist} 
  Let $(M,g_0)$ be smooth, complete, without boundary,  with 
  non-negative bounded curvature operator. Assume also that 
  $\AVR(M,g_0) =:  V_0 > 0$.
  Then there exists a solution 
  to Ricci flow $(M,g(t))_{t \in [0,\infty)}$ with $g(0) = g_0$. Furthermore, 
  the solution satisfies the following estimates. 
  \begin{eqnarray} 
    (a_t) & \curlR(g(t)) \geq 0 \cr 
    (b^\prime_t)& \AVR(M,g(t)) = V_0 \cr \nonumber 
    (c_t)&\sup_{M} |\Riem(g(t))| \leq {K^2 \on t},  \cr \nonumber
    (d_t)& d(p,q,0)  \geq  d(p,q,t)  \geq  
    d(p,q,s) -K      (\sqrt t - \sqrt s) 
  \end{eqnarray} 
  for all $ t \in 
  [0,\infty)$ and $p,q \in M$,  where $K=  K(n,V_0)>0$ is a positive 
  constant and $\curlR(g)$ is the curvature operator of 
  $g$. 
\end{theo}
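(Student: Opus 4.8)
The plan is to deduce Theorem \ref{thm:longtimeexist} from the short time existence result (Theorem \ref{Shorttime}) by a parabolic rescaling argument, exploiting the fact that the short time of existence $T(n,v_0)$ and the constants depend only on $n$ and a lower bound $v_0$ on unit ball volumes, \emph{not} on the curvature bound. The key observation is that positivity of $\AVR$ gives a scale invariant non-collapsing bound: by inequality (4.2) (the Bishop--Gromov consequence $\vol(B_r(x))/r^n \geq V_0$), any parabolic rescaling $g_0 \mapsto c g_0$ of the initial metric still satisfies $\vol(B_1(x)) \geq V_0 > 0$ for all $x$, since rescaling the metric by $c$ rescales the volume of a unit ball to the volume of a $c^{-1/2}$-ball of the original metric divided by $c^{n/2}$, which is $\geq V_0$. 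Thus the hypotheses of Theorem \ref{Shorttime} hold uniformly along \emph{all} rescalings, with the \emph{same} $v_0 = V_0$.

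First I would fix $T = T(n,V_0)$ and $K = K(n,V_0)$ from Theorem \ref{Shorttime} applied with $v_0 = V_0$. Running the flow from $g_0$ gives a solution on $[0,T)$ satisfying $(a_t)$--$(d_t)$, in particular $\sup_M|\Riem(g(t))| \leq K^2/t$. Now I would iterate: take the metric $g(T/2)$, which (from Theorem \ref{Shorttime}'s estimate at $t = T/2$) has bounded curvature operator, still non-negative, and by $(d_t)$ the distances have only shrunk boundedly, so one checks $\vol(B_1(x,T/2)) \geq V_0/2 > 0$ — actually it is cleaner to use the $\AVR$ directly. For this I need that $\AVR$ is preserved (or at least stays bounded below) along the flow; the excerpt already records (citing \cite{Yokota08}, Theorem 7) that open solutions to Ricci flow with non-negative bounded curvature operator have constant asymptotic volume ratio, which gives $(b'_t)$ and also re-supplies the non-collapsing hypothesis $\vol(B_1(x,t))/1 \geq \AVR(M,g(t)) = V_0$ at every time reached. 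Hence I can restart Theorem \ref{Shorttime} at any time $t_0$ in the existing interval, with the \emph{same} constants $T, K$, obtaining an extension to $[t_0, t_0 + T)$. Overlapping these extensions (say restart at each $t_0 = kT/2$) yields a solution on all of $[0,\infty)$. The estimates $(a_t), (b'_t), (d_t)$ pass to the limit interval directly; for $(c_t)$ one uses the restart trick combined with the scale invariant form of the curvature bound: for any $t > 0$, pick $t_0 = t/2$, restart, and the solution on $[t_0, t_0 + T)$ satisfies $|\Riem(g(s))| \leq K^2/(s - t_0)$, but one actually wants $K^2/t$ globally; this is obtained by instead rescaling $g_0$ by $c = 1/t$ (legitimate since the non-collapsing constant $V_0$ is scale invariant), solving on $[0,T)$ in rescaled time, evaluating at rescaled time $\approx 1$, and scaling back — this is precisely the "long time existence follows essentially from scaling" remark in the text.

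More precisely, the cleanest route for $(c_t)$: given $T_0 > 0$, set $c = 1/(NT_0)$ for $N$ large enough that $N \geq 1/T$ (so that rescaled time $1$ lies in $[0,T)$... actually just need $1 < $ the existence time, but the existence time in rescaled units is at least $T$, and by the iteration argument it is in fact $\infty$). Apply Theorem \ref{Shorttime} to $(M, c g_0)$ with non-collapsing constant $V_0$: since $\AVR$ is scale-invariant, this is valid, giving $\sup_M |\Riem| \leq K^2/\tau$ at rescaled time $\tau$. At $\tau = c T_0 \cdot (\text{something})$... the bookkeeping is: $g(t)$ for the original flow equals $c^{-1}$ times the rescaled flow at rescaled time $ct$, so $|\Riem(g(t))| = c|\Riem(\text{rescaled at } ct)| \leq c \cdot K^2/(ct) = K^2/t$, valid as long as $ct$ is within the rescaled existence interval — and by the iteration step that interval is $[0,\infty)$, so $(c_t)$ holds for all $t > 0$ with $K = K(n,V_0)$.

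The main obstacle I anticipate is making the iteration/restart rigorous: one must verify that at the restart time $t_0$ the metric $g(t_0)$ genuinely satisfies \emph{all} hypotheses of Theorem \ref{Shorttime} with the \emph{unchanged} constant $V_0$ — non-negativity of the curvature operator (preserved by the maximum principle for bounded-curvature solutions, already noted in the proof of Theorem \ref{Shorttime}), boundedness of curvature at $t_0 > 0$ (from $(c_t)$), and the non-collapsing $\vol(B_1(x,t_0)) \geq V_0$, which is where the invocation of \cite{Yokota08} (constancy of $\AVR$) combined with (4.2) is essential; without constancy of $\AVR$ one would only get a geometric decay of the non-collapsing constant and the iteration would terminate in finite time. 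A secondary subtlety is gluing the estimate $(d_t)$ across restart times, where the cumulative distance distortion $\sum K(\sqrt{t_{k+1}} - \sqrt{t_k})$ must telescope correctly to $K(\sqrt t - \sqrt s)$; this works because $(d_t)$ is stated in a telescoping-friendly form, but one should check the additivity carefully. Finally, one should confirm that the solution obtained by concatenation is smooth across the restart times — this follows from uniqueness of bounded-curvature Ricci flow (Chen--Zhu), so the restarted pieces agree with the original flow on overlaps.
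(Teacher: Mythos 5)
Your proposal is built on the same central observation as the paper: since $\AVR$ is scale invariant and Bishop--Gromov gives $\vol(B_1(x)) \geq V_0$ for every rescaling $cg_0$, the short time existence theorem (Theorem \ref{Shorttime}) applies at every scale with the \emph{same} constants $T(n,V_0), K(n,V_0)$, and Yokota's theorem supplies $(b'_t)$. Where you diverge from the paper is in how you globalize. You restart the flow at times $kT/2$ (using the constancy of $\AVR$ to re-supply the non-collapsing hypothesis with the unchanged constant $V_0$) and then invoke Chen--Zhu uniqueness of complete bounded-curvature Ricci flows to identify the restarted pieces with the rescaled flows and so transfer the scale-invariant estimates. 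The paper instead never restarts and never uses uniqueness: for each $c_i \to 0$ it applies Theorem \ref{Shorttime} once to $c_i g_0$, scales back to get a flow on $[0,T/c_i)$ starting at $g_0$ which already carries $(a_t),(c_t),(d_t)$ (all scale invariant) and, via Yokota, $(b'_t)$, and then extracts a limit flow on $[0,\infty)$ using Shi's estimates, the uniform curvature bounds of Remark \ref{helpful} near $t=0$, and Hamilton's compactness theorem. Your route buys a single flow with no subsequence extraction, at the price of importing the uniqueness theorem; the paper's route is self-contained but only produces a subsequential limit (which suffices since every flow in the sequence satisfies the desired estimates).

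One step in your write-up is genuinely wrong as stated, though repairable with tools you already deploy: the claim that $(d_t)$ ``telescopes'' across restart times. After restarting at $t_k$, Theorem \ref{Shorttime} gives, for $t_k \leq s \leq t$, only $d(p,q,t) \geq d(p,q,s) - K\bigl(\sqrt{t-t_k} - \sqrt{s-t_k}\bigr)$, and since $\sqrt{t-t_k} - \sqrt{s-t_k} \geq \sqrt{t} - \sqrt{s}$ this is \emph{weaker} than the desired increment; summing over pieces of length $\sim T$ produces a distance loss growing roughly linearly in $t-s$ rather than like $\sqrt{t}-\sqrt{s}$, so the concatenation argument does not yield $(d_t)$ with a constant $K(n,V_0)$. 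The fix is exactly your mechanism for $(c_t)$: $(d_t)$ is invariant under the parabolic rescaling $g \mapsto c^{-1}\tilde g(c\,\cdot)$, so for any fixed $t$ choose $c$ with $ct < T(n,V_0)$, apply Theorem \ref{Shorttime} to $cg_0$, scale back, and identify the resulting flow with yours by the same uniqueness argument (or, as in the paper, simply build the global flow from these rescaled flows in the first place). With that correction, and noting the minor bookkeeping slip in your scaling of unit-ball volumes (one divides by $r^n = c^{-n/2}$, i.e.\ multiplies by $c^{n/2}$, to land at $\geq V_0$), your argument is complete.
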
 

\begin{proof} Let $c \in (0,\infty)$ and $\ti g_0 := c g_0$. Then we
  still have $\sup_M |\Riem(\ti g_0)| < \infty$ and $\AVR(M,\ti g_0)=
  V_0 >0$ as $\AVR(M,g)$ is a scale invariant quantity. From the
  Bishop-Gromov comparison principle, we have $\vol(\ti B_1(x)) \geq
  V_0 >0$ for all $x \in M$. Using the result above (Theorem
  \ref{Shorttime}), we obtain a solution $(M,\ti g(t))_{t \in
    [0,T(n,V_0))}$ satisfying $\ti g(0)= \tg_0 $ and the estimates
  $(a_{t}), (b_{t}), (c_{t}), (d_{t})$ for all $ t \in [0,T(n,V_0))$.
  Setting $g(t):= (1/c) {\ti g}(ct)$, for $t \in [0,(T/c))$ we also
  obtain a solution to Ricci flow with bounded non-negative curvature
  operator, satisfying $g(0)= g_0$ and the estimates $(a_t), (c_t) $
  and $(d_t)$ for all $t \in [0,(T/c))$, as $(a_t),(c_t),(d_t)$ are
  invariant under this scaling.  Furthermore, by \cite[Theorem
  7]{Yokota08} we have $\AVR(M,g(t)) = V_0$, and thus $\vol(B_r(x,t))
  \geq V_0 r^n$ for all $r>0$.  Now taking a sequence $c_i \to 0$ (in place
  of $c$ in the argument above), we obtain the result, in view of this
  estimate, $(a_t)$ and $(c_t)$, and the estimates of Shi and the
  compactness Theorem of Hamilton \cite{Ha95a}, see \cite{Ha95}.  Note
  that in fact $\sup_M |\Riem(g(t))| \leq k_0 \ti k(n) $ for all $t
  \leq U(n)/k_0$, where $k_0:= \sup_M |\Riem(g_0)|$, in view of Remark
  \ref{helpful}. Additionally $\sup_M |\Riem(g(t))| \leq \frac{K^2
    k_0}{U(n)}$ for all $t\geq U(n)/k_0 $ in view of the scale
  invariant estimate $(c_t)$ and hence the results of Shi (see
  \cite{Ha95}) apply.
\end{proof}

\section{Rescaling}\label{Rescaling} 
\setcounter{equation}{0}  

\noindent In this chapter we show that it is possible 
to scale down solutions of the type obtained in Theorem 
\ref{thm:longtimeexist} to obtain an expanding soliton  
coming out of the asymptotic cone $(X,d_X)$ at infinity of 
$(M,h)$.  
\begin{proof}[Proof of Theorem \ref{thm:mainthm}]  
  We assume that $(M,h)$ is a smooth manifold with 
  non-negative,   bounded curvature operator and positive 
  asymptotic volume ratio   $V_0:=  \text{AVR}(M,h)>0$. Now 
  let $c_i \rightarrow 0$   be a sequence of positive 
  numbers, converging to zero. Then $(M,c_ih, p_0)$   
  converges in the pointed  Gromov-Hausdorff sense to 
  the metric cone $(X,d_X,0)$.   By Theorem 
  \ref{thm:longtimeexist} there 
  exists a Ricci flow   
  $(M,g(t))_{t\in[0,\infty)}$ with $g(0)= h$ which satisfies  
  $(a_t),(c_t)$ and $\text{AVR}(M,g(t))= \text{AVR}(M,g(0))$.  
  By Hamilton's Harnack estimate, see for example equation 10.46 of
  Chapter 10, \S 4 of \cite{ChLuNi} , we have   
  \begin{equation}\label{eq:harnack}     
    \partt \big( \, t \, \Sc(p,g(t))\big) 
    \geq 0    
  \end{equation}  
  for all $t \in 
  [0,\infty)$ for all   $p \in M$.   We define the 
  scaled   Ricci flows $(M,g^i(t))_{t\in[0,\infty)}$ by   
  $$g^i(t):=  c_ig(t/c_i)\ .$$   Note that these flows 
  still satisfy $(a_t), (c_t)$ and $\text{AVR}(M,g^i(t)) =
  \text{AVR}(M,h)$ and we have a uniform lower bound for the
  injectivity radius for times $t \in [\delta, \infty)$, $\delta >0$,
  since the curvature is uniformly bounded and the volume of balls is
  uniformly bounded from below on such time intervals.  Hence, we may
  take a pointed limit of the flows $(M,g^i(t),p_0)_{t \in (0,
    \infty)}$ to obtain a smooth Ricci flow $(\ti X,\tilde{g}(t),\ti
  x_0)_{ t \in (0,\infty)}$ with
  \begin{eqnarray}
    \ti V_0 := \text{AVR}(\ti{X},\ti{g}(t)) 
    \geq  \text{AVR}(M,h)= V_0  > 0 \label{tvz}
  \end{eqnarray}
  for all $t \in (0,\infty)$.
  $\ti V_0 $ is a constant by \cite[Theorem
  7]{Yokota08}. 
Note that we also have the following estimates:
$d_X(\cdot,\cdot) \geq d(\ti g(t))(\cdot,\cdot) \geq d_X(\cdot,\cdot)
- K\sqrt t$ where $(X,d_X,0)$ is the asymptotic cone at infinity of
$(M,h)$.
These estimates follow after taking a limit of the estimates $(d_t)$
$d(c_ih)(\cdot,\cdot) \geq d(g^i(t))(\cdot,\cdot) \geq d(c_ih)(\cdot,\cdot)
- K\sqrt t$, which hold by construction of our solution.
In particular we see that $(\ti X, \ti g(t), \ti x_0)$ converges in the
pointed Gromov-Hausdorff sense to $(X,d_X,0)$ as $t \rightarrow 0$.

  A result of Cheeger and Colding (see Theorem 5.4  of \cite{ChCo}) gives that volume
  is continuous under the Gromov-Hausdorff limit of non-collapsing
  spaces with Ricci curvature bounded below. Thus since $(\tilde{X},
  \tilde{g}(t), \tilde{x}_0)$ converges to the asymptotic cone at infinity 
  $(X,d_X,0)$ of $(M,h,p_0)$ as $t\rightarrow 0$, and the
  Bishop-Gromov volume comparison
  principle holds, we have
  $$ \ti V_0\leq\text{AVR}(X,d_X)= \text{AVR}(M,h) = V_0\ ,$$
  and thus $\ti V_0 = V_0$. By \eqref{eq:harnack}, we also have
  $\partt (t \Sc(p,g^i(t)) ) \geq 0$ for all $t \in [0,\infty)$ 
  and all   $p \in M$.   For $p \in M$ define 
  $S(p) := \lim_{t\rightarrow \infty} t \Sc(p,g(t))$, which is 
  a well   defined and positive real (non-infinite) 
  number by   \eqref{eq:harnack} and $(c_t)$. Since this 
  quantity is scale-invariant it   follows that    
  $$\lim_{i\rightarrow \infty} t_0 \Sc(p,g^i (t_0))=  S(p)$$   
  for any fixed $t_0>0$. Note that the convergence of  
  $(M,g^i(t),p_0) \to (\ti X,\tilde{g}(t),\ti x_0)$ is smooth on 
  compact sets contained    in $(0,\infty) \times \ti X$, 
  and $p_0$ is mapped by the diffeomorphisms   involved 
  in the pointed Hamilton-Cheeger-Gromov convergence onto $\ti x_0$, 
  thus we have    $$ t\Sc(\ti x_0,\tilde{g}(t))=  S(p_0)$$ 
  for all $t>0$. Recall that the evolution equation for the Ricci
  curvature is given by
$$ \frac{\partial}{\partial t}\Ric^{i}_{\ j} = \Delta \Ric^{i}_{\ j}
+ 2\, \Ric^{r}_{\ s}\,\Riem^{i\ \ s}_{\ rj}\, .$$ Now note, that if
$\Ric(y,\tilde{g}(t))(Y,Y)= 0$ for some $Y \in T_y \ti X$ then we have
in view of the de Rham Decomposition Theorem (see Appendix \ref{A}
with $h$ of the Decomposition Theorem equal to $\Ric$) a splitting,
$(\ti X,\tilde{g}(t)) = (L \times \Omega, h \oplus l(t))$ where
$(L,h)$ has zero curvature operator and $(\Omega,l(t))$ has positive
Ricci curvature (here we use that $\Ric(Y,Y) = 0$ implies
$\sec(Y,V)=0$ for all $V$ in view of the fact that $\curlR \geq 0$).
In fact $(L,h) = (\R^k,h)$ where $h$ is the standard metric.  This may
be seen as follows.  If $(L,h)$ is not $ (\R^k,h)$, then the first
fundamental group of $(L,h)$ is non-trivial. This would imply in
particular that the first fundamental group of $(L \times \Omega, h
\oplus l(t)) = (\ti X,\tilde{g}(t))$ is also non-trivial. Using the
same argument given in Theorem 9.1 of \cite{Si6} (see Lemma
\ref{homeo} in this paper for some comments thereon), we see that
$(\ti X,\tilde{g}(t),\ti x_0)$ is homeomorphic to $(X,d_X,0)$, and
hence $(X,d_X)$ has non-trivial first fundamental group.  But as
explained at the beginning of this paper $(X,d_X)$ is a cone over a
standard sphere. In particular $(X,d_X)$ is homeomorphic to $\R^n$.
Hence $(X,d_X)$ has trivial first fundamental group which leads to a
contradiction.

Hence, we may write $(\ti X,\tilde{g}(t))= (\R^k \times \Omega, h
\oplus l(t))$ where $(\Omega,l(t))$ is a solution to the Ricci flow
satisfying $(a_t)$, $(c_t)$ and $\Ric(l(t)) >0$ for all $t>0$. Using
Fubini's theorem, it is easy to see that the asymptotic volume ratio
of $l(t)$ is given by $(\omega_{n-k}/\omega_{n}) \ti V_0 > 0$, where
$\omega_m$ is the volume of the $m$-dimensional Euclidean unit ball.

We show in the following that $(\Omega,l(t))_{t \in (0,\infty)}$ is a
gradient expanding soliton, generated by some smooth function
$f$. Hence $(\ti X,\tilde{g}(t))= (\R^k \times \Omega, h \oplus l(t))$
is a gradient expanding soliton: $ \grad^2 f(t) - \Ric(l(t)) -
(1/(2t)) l(t) = 0$ on $\Omega$ and $ \grad^2 v(t) -\Ric(h) - (1/(2t))h
= 0$ on $\R^k$ where $v(x,t) = \frac{|x|^2}{4t}$, and hence $ \grad^2
\ti f(t) - \Ric(\ti g(t)) - (1/(2t))\ti g(t) = 0$ for all $t>0$ on
$\ti X$ with
$\ti f(x,y,t)= f(y,t) + v(x,t)$ for $(x,y) \in (\R^k \times \Omega)$.\\[1ex]
For simplicity let us denote $l(t)$ again by $\tilde{g}(t)$, i.e. we
assume that $k= 0$.

\begin{rema} $(i)$ After completing this paper we noticed that we
  could use the proof of Proposition 12 of the paper of S.~Brendle
  \cite{SB} at this point to show that $(X,\ti g(t))$ is an expanding
  gradient soliton.  We include here our original proof
  which follows the lines of that given in \cite{ChLuNi}.\\[1ex]
  \noindent $(ii)$ In \cite{ChLuNi} and \cite{ChZh} it is assumed that
  $t \Sc(\cdot,t)$ achieves its maximum somewhere in order to conclude
  that the solution is a soliton. We make no such assumption.  We show
  that $\grad R (\ti x_0) = 0$ in view of the fact that $\partt (t
  R(t,\ti x_0))= 0$, and then argue as in \cite{ChLuNi} and
  \cite{ChZh}.
\end{rema}

\noindent For the rest of this argument we work with the Riemannian
metrics $\ti{g}(t)$. For ease of reading we introduce the notation
$\Sc(x,t):= \Sc(x, \tilde{g}(t))$, $\Ric(x,t):= \Ric(\ti g(t))(x)$ and
so on.  All metrics and covariant derivatives are taken with respect
to the metrics $\ti{g}(t)$.  We assume that $\ti g_{ij} = \de_{ij}$ at
points where we calculate, and indices that appear twice are
summed. We saw before that we may assume that $\partt(t \Sc(\ti x_0,
t ) )= 0$.  But then
\begin{eqnarray}\label{testy}
  0= \partt(t \Sc(\ti x_0,t))_{t=1}=  \lap \Sc(\ti x_0,1) +  
  2|\Ric|^2(\ti x_0,1) + \Sc(\ti x_0,1). \cr   
\end{eqnarray} 
By Theorem 10.46 in \cite{ChLuNi}, with $v_{ij}= \Ric_{ij}$, we have
\begin{equation} \label{eq:lintraceharnack} Z(Y):=
  \nabla_i\nabla_j\Ric_{ij}= |\Ric|^2 + 2(\nabla_j\Ric_{ij})Y_i
  +\Ric_{ij}Y_iY_j + \frac{\Rb}{2t} \geq 0\ ,
\end{equation} 
for any tangent vector $Y$.  In particular for $$Y=
-(\Ric^{-1})^{ji}\text{div}(\Ric)_j \parti{}{x^i}=
-(1/2)(\Ric^{-1})^{ji} \grad_j \Sc \parti{}{x^i},$$ we see that
\begin{equation}
  \label{eq:lintraceharnackv2} 
  \begin{split}      
    Z(Y)= &\ (1/2)\lap \Sc + |\Ric|^2 -
    (1/2)(\Ric^{-1})^{ij} (\nabla_i  \Sc) (\nabla_j  \Sc)\\
    &\ + (1/4)\Ric_{ij} (\Ric^{-1})^{si}\grad_s \Sc
    (\Ric^{-1})^{jk}\grad_k \Sc + \frac{\Rb}{2t}\\ = &\ (1/2)\Big(
    \lap \Sc +2 |\Ric|^2 - (1/2)(\Ric^{-1})^{ij}(\nabla_i
    \Sc)(\nabla_j \Sc) + \frac{\Rb}{t}\Big) \geq 0.
  \end{split}
\end{equation}   Using 
\eqref{testy}, we have 
\begin{equation}
  0 \leq Z(Y)(\ti x_0,1)=  - (1/4)(\Ric^{-1})^{ij}(\nabla_i  
  \Sc)(\nabla_j  \Sc)(\ti x_0,1) 
\end{equation}  
and hence $\nabla \Sc(\ti x_0,1)= 0$. This implies that
$$ Z(Y)(\ti x_0,1)=  0$$
which is a global minimum for $Z(Y)$. Now we use the evolution
equation for $Z(Y)$, which is given by equation (10.73) in
\cite{ChLuNi}.  It implies in particular that
\begin{eqnarray}
  \partt Z(Y) \geq \lap Z(Y), 
\end{eqnarray}  
and hence by the strong 
maximum principle, we must have   $Z(Y)=  0$ 
everywhere. By looking once again at  the 
equation (10.73) in   \cite{ChLuNi} and using   the 
matrix Harnack   inequality and the fact that $Z(Y) 
=  0$, we get   
\begin{equation*}    
  \partt Z(Y) \geq 2v_{ij}(\grad_k Y_i -\Ric_{ik} - (1/(2t))\de_{ik})(  
  \grad_k Y_j -\Ric_{jk} - (1/(2t))\de_{jk})\ .   
\end{equation*} 
If at some point in space and time we have $\grad Y - \Ric -
(1/(2t))\ti g \neq 0$ as a tensor, then we get $ 2v_{ij}(\grad_k Y_i
-\Sc_{ik} - (1/(2t))\de_{ik})( \grad_k Y_j -\Sc_{jk} -
(1/(2t))\de_{jk}) >0$ which would imply that $\partt Z(Y) >0$ at this
point in space and time. This in turn would imply that there are points in
space and time with $ Z(Y) > 0$, which is a contradiction.  Hence
$\grad Y - \Ric - (1/(2t))\ti g = 0$, which yields that $\ti g(t)$ is
an expanding gradient soliton.
\end{proof} 
\noindent For completeness we include the following Lemma, whose statement and
proof appeared in the proof we just gave.
\begin{lemm} \label{homeo}
  Let $(\ti X,\ti g(t),\ti x_0)_{t \in (0,\infty)}:=  \lim_{i \to 
    \infty}(M,g_i(t),p_0)_{t \in (0,\infty)}$   be the 
  solution obtained above, and $(X,d_X,0)$   be the 
  pointed Gromov-Hausdorff limit of $(M,c_ih,p_0)=   
  (M,g_i(0),p_0)$, ($c_i \to 0$).   
  Then $(\ti X,d(\ti g(t)),\ti x_0) \to (X,d_X,0)$ 
  in the pointed Gromov-Hausdorff sense  
  as $t \to 0$. That is, the solution flows out 
  of the cone   $(X,d_X,0)$. Furthermore, $\ti X$ is 
  homeomorphic to $X$ which is homeomorphic to $\R^n$. 
\end{lemm}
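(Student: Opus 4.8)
The plan is to read off both assertions from the scale-invariant distance estimates $(d_t)$ of Theorem \ref{thm:longtimeexist}, following the limiting argument already carried out in the proof of Theorem \ref{thm:mainthm}. By construction the rescaled flows satisfy $d(c_i h)(\cdot,\cdot) \ge d(g_i(t))(\cdot,\cdot) \ge d(c_i h)(\cdot,\cdot) - K\sqrt{t}$ for all $i \in \N$ and all $t \ge 0$, with $K = K(n,V_0)$ independent of $i$. Fixing $t > 0$ and passing to the Hamilton--Cheeger--Gromov limit $i \to \infty$, while simultaneously using $(M,c_i h, p_0) \to (X,d_X,0)$ in the pointed Gromov--Hausdorff sense, these inequalities pass to the limit and give $d_X(\cdot,\cdot) \ge d(\ti g(t))(\cdot,\cdot) \ge d_X(\cdot,\cdot) - K\sqrt{t}$ on $\ti X$, where $\ti X$ and $X$ are identified as sets via the limit maps. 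Since by $(d_t)$ the distance $d(\ti g(\cdot))$ is non-increasing in $t$, letting $t \searrow 0$ shows $d(\ti g(t)) \to d_X$ monotonically and uniformly on compact subsets, which is exactly pointed Gromov--Hausdorff convergence $(\ti X, d(\ti g(t)), \ti x_0) \to (X, d_X, 0)$. This gives the first assertion.

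For the homeomorphism $\ti X \cong X$ I would argue as in Theorem 9.1 of \cite{Si6}. The inputs are: for each fixed $t > 0$ the metric $\ti g(t)$ is smooth with uniformly controlled geometry on balls of bounded radius (bounded curvature from $(c_t)$, a uniform lower volume bound since $\AVR(\ti X,\ti g(t)) = V_0 > 0$ forces $\vol(B_1(\cdot,t)) \ge V_0$, and Shi's derivative estimates); and the two-sided estimate $d_X \ge d(\ti g(t)) \ge d_X - K\sqrt{t}$ above, which says the set-identification $F : \ti X \to X$ of the first paragraph distorts distances only by the additive term $K\sqrt{t} \to 0$. Comparing exponential charts of $\ti g(t)$ for small $t$ with the distance $d_X$ and running an exhaustion argument over larger and larger balls, one checks that $F$ is a continuous bijection with continuous inverse. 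Equivalently, one may invoke the stability theorem for non-collapsed Alexandrov spaces of curvature $\ge 0$, applied to the convergence $(\ti X, d(\ti g(t))) \to (X,d_X)$ just established: for small $t$ the space $(\ti X, d(\ti g(t)))$, which is homeomorphic to the manifold $\ti X$, is homeomorphic to $X$. In either case $\ti X$ is homeomorphic to $X$.

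Finally $X \cong \R^n$: since the sequence $(M, c_i h, p_0)$ is non-collapsing (as $\AVR(M,h) = V_0 > 0$), the asymptotic cone $X = CV$ is the Euclidean cone over a space $V$ homeomorphic to $S^{n-1}$ by the Perelman--Kapovitch result recalled in Appendix \ref{B}, and the Euclidean cone over $S^{n-1}$ is homeomorphic to $\R^n$ via $(r,v) \mapsto r v$ for $v \in S^{n-1} \subset \R^n$. Hence $\ti X$ is homeomorphic to $X$ which is homeomorphic to $\R^n$.

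The main obstacle is extracting the homeomorphism from the distance comparison: $X$ carries no a priori manifold structure beyond being a metric space, so the topological identification must be squeezed entirely out of $|d(\ti g(t)) - d_X| \le K\sqrt{t}$ together with the smooth geometry of $\ti g(t)$ for positive time, while carefully keeping track of the two successive identifications (first $i \to \infty$, then $t \searrow 0$). Verifying continuity of $F^{-1}$ is the delicate step, and is precisely the content of \cite[Theorem 9.1]{Si6}, to which we refer for the details.
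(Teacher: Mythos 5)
Your argument is correct and follows essentially the same route as the paper: the pointed Gromov--Hausdorff convergence as $t \searrow 0$ is obtained exactly as in the proof of Theorem \ref{thm:mainthm} by passing the scale-invariant estimates $(d_t)$ to the limit $i \to \infty$ to get $d_X \geq d(\ti g(t)) \geq d_X - K\sqrt{t}$, the homeomorphism $\ti X \cong X$ is delegated to the argument of Theorem 9.1 of \cite{Si6}, and $X \cong \R^n$ comes from the Kapovitch--Perelman result of Appendix \ref{B} identifying $X$ as a cone over a topological $S^{n-1}$. Your alternative suggestion of invoking Alexandrov stability is a reasonable variant, but the main line of reasoning coincides with the paper's proof.
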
 
\begin{proof} 
  (We repeat the proof given above).  Using the same argument given in
  Theorem 9.1 of \cite{Si6} , we see that $(\ti X,\tilde{g}(t),\ti
  x_0)$ is homeomorphic to $(X,d_X,0)$ ( note that in the argument
  there, $U$ and $V$ should be {\bf bounded} open sets: this is
  sufficient to conclude that the topologies are the same, since any
  open set can be written as the union of bounded open sets in a
  metric space).  But as explained at the beginning of this paper
  $(X,d_X)$ is a cone over a sphere, where the topology of the sphere
  is the same as that of the standard sphere. In particular $(X,d_X)$
  is homeomorphic to $\R^n$.
\end{proof}  

\begin{appendix} 
\begin{section}{de Rham splitting}\label{A} 
  \noindent In this appendix, we explain some known splitting results,
  which follow from the de Rham Splitting Theorem. We give proofs for
  the reader's convenience. We follow essentially the argument given
  in the proof of Theorem 2.1 of \cite{Cao} which follows closely that
  of Lemma 8.2/Theorem 8.3 of \cite{Ha86}. For a two tensor
  $\beta_{ij}$ we let $\beta^{i}_{\ j}:= g^{il}\beta_{lj}$, i.e. this
  defines an endomorphism from each tangent space into itself. 
\begin{theo} 
  Let ${h(t)}_{t \in [0,T)}$ be 
  a smooth (in space and time) bounded family of symmetric two 
  tensors
  defined on a simply connected  complete manifold $M^n$ without boundary,
  satisfying the evolution equation
  \begin{eqnarray} 
    \partt h^i_{\ j}=  {}^{g(t)}\lap h^i_{\ j} + \phi^i_{\ j}
  \end{eqnarray} 
  where $h(x,t), \phi(x,t) \geq 0$ for all $(x,t) \in M \times [0,T)$
  in the sense of matrices. We also assume $g$ is a smooth family of
  metrics (in space and time) satisfying $^{g_0}|D^{(i,j)} g| +
  {}^{g_0}|D^{(i,j)} h| \leq k(i,j)< \infty $ everywhere, where $i,j
  \in \N $ and $D^{(i,j)}$ refers to taking $i$ time derivatives and
  $j$ covariant derivatives with respect to $g_0$, and $k(i,j) \in \R$
  are constants.  Then for all $x \in M, t>0$, the null space of
  $h(x,t)$ is invariant under parallel translation and constant in
  time.  There is a splitting, $(M,g(t))= (N \times P, r(t) \oplus
  l(t))$, where $r,l$ are smooth families of Riemannian metrics such
  that $h>0$ on $N$ (as a two tensor), and $h = 0$ on $P$.
\end{theo}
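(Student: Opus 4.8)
The plan is to prove the splitting by combining a parabolic maximum principle argument (to show that the null space of $h(x,t)$ is parallel and time-independent) with the de Rham Splitting Theorem. The key point is that once we know the null distribution $\mathcal{N}_x := \ker h(x,t) \subseteq T_xM$ is a smooth, parallel distribution, de Rham gives the splitting $(M,g(t)) = (N\times P, r(t)\oplus l(t))$ on the universal cover (which is $M$ itself by simple-connectedness), with $h>0$ on the $N$-factor and $h\equiv 0$ on the $P$-factor. So the whole proof reduces to the invariance statement.

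First I would establish the rank of $h$ is constant in space and time, at least locally, and that the null space is parallel. The standard approach (following Hamilton, \cite{Ha86}, and \cite{Cao}) is: since $h\geq 0$ and $\phi\geq 0$, the zero eigenvalues of $h$ are preserved and the null space can only shrink. More precisely, for a fixed $x_0$ and $t_0>0$, pick a null vector $V_0 \in \ker h(x_0,t_0)$, extend it to a vector field $V$ that is parallel along geodesics emanating from $x_0$ at time $t_0$, and consider the scalar quantity $u := h(V,V)$ (or better, work with the function $f = h^i_{\ j}V^j$ paired appropriately). Using the evolution equation $\partt h^i_{\ j} = \lap h^i_{\ j} + \phi^i_{\ j}$, one derives that along the flow $u$ satisfies a differential inequality of the form $\partt u \geq \lap u - (\text{lower order terms depending on curvature bounds and } \grad V)$ — the bounded-geometry hypotheses $^{g_0}|D^{(i,j)}g| + {}^{g_0}|D^{(i,j)}h| \leq k(i,j)$ are exactly what is needed to control these terms and to justify applying the maximum principle on the noncompact manifold $M$. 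Since $u \geq 0$ and $u(x_0,t_0)=0$ is an interior spatial minimum, the strong maximum principle (Bony's version, or the Hamilton/Cao argument) forces $u\equiv 0$ in a neighborhood, hence $\grad_W(h(V,V)) = 0$ and then $\grad h (\cdot, V, V) = 0$ for all such $V$; combined with $h\geq 0$ this yields that $V$ stays in the null space under parallel transport. Running this for $t_0$ decreasing and using continuity gives time-independence as well.

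Then I would assemble the splitting: the null distribution $\mathcal{N}$ is smooth (its rank is locally constant by the above), parallel with respect to the Levi-Civita connection of $g(t)$, hence so is its orthogonal complement; since $M$ is simply connected and complete, the de Rham Splitting Theorem yields a global isometric product $(M,g(t)) = (N\times P, r(t)\oplus l(t))$ with $T P = \mathcal{N}$ and $TN = \mathcal{N}^\perp$. Because $\mathcal{N}$ does not depend on $t$, the factors $N$ and $P$ (as smooth manifolds, and the splitting of $TM$) are the same for every $t$, so $r(t)$ and $l(t)$ are genuinely smooth families of metrics on fixed manifolds. Finally $h$ restricted to $TP = \mathcal{N}$ is zero by definition, and $h$ restricted to $TN$ is positive definite since $\mathcal{N} = \ker h$ exactly.

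The main obstacle is the maximum-principle step on a noncompact manifold: one must be careful that the quantity being estimated ($h(V,V)$ for a suitably extended $V$, or a related test function) genuinely satisfies a clean parabolic inequality — the cross terms involving $\grad V$, the Christoffel symbols, and $\lap h$ paired with a non-parallel extension of $V$ all need to be absorbed, and this is where the global bounds $k(i,j)$ on derivatives of $g$ and $h$ are essential (to get a bounded solution to a linear parabolic inequality and invoke, e.g., the maximum principle of \cite{EH} or \cite{NiTa2} as cited earlier, or Bony's strong maximum principle for the rank/parallel statement). A secondary subtlety is that the null rank of $h$ could a priori jump; showing it is locally constant (which is what lets de Rham apply with a smooth distribution) again comes out of the strong maximum principle applied to minors or eigenvalues of $h$, exactly as in Lemma 8.2 of \cite{Ha86}.
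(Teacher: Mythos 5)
Your overall skeleton --- locally constant rank, a parallel and time-independent null distribution, then the de Rham theorem on the simply connected complete $M$ --- is the same as the paper's, and the reduction of the splitting to the invariance statement is fine. The genuine gap is in the mechanism you propose for the invariance itself. For $V$ obtained by freezing time at $t_0$ and parallel transporting $V_0$ along radial geodesics, the function $u=h(V,V)$ does not satisfy a usable parabolic inequality: expanding $\lap\big(h(V,V)\big)$ produces the terms $4g^{kl}(\grad_k h)(V,\grad_l V)+2g^{kl}h(\grad_k V,\grad_l V)+2h(\lap V,V)$ (and, since $V$ is time-independent while $g$ and $h$ evolve, there are no compensating time-derivative terms), and these are not of the form $c\,u+b\cdot\grad u$; the bounded-geometry hypotheses $k(i,j)$ only make them bounded, so at best you get $\partt u\geq \lap u - C$, to which no strong maximum principle (Bony or otherwise) applies to force $u\equiv 0$ near $x_0$. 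This is exactly why the paper (following Hamilton and Cao) separates the argument into two steps: it first proves constancy of the null dimension for $t>t_0$ by a barrier argument --- constructing a positive solution $\eta$ of the heat equation by exhaustion with Dirichlet problems, and showing $\si_1+\cdots+\si_k\geq e^{-at}\eta$ via comparison with the barrier $\ep\, e^{\rho^2(1+at)+at}$ to handle noncompactness --- and only then, with the rank locally constant, chooses a smooth vector field $v$ lying identically in the null space, for which $h(v,v)\equiv 0$ makes all the problematic terms collapse into the single identity $2g^{kl}h^i_{\ j}\grad_k v^j\grad_l v_i+\phi^i_{\ j}v_iv^j=0$; nonnegativity of $h$ and $\phi$ then yields both $\grad v\in\ker h$ (parallel invariance) and $\ker h\subseteq\ker\phi$. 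Deferring to ``Lemma 8.2 of Hamilton'' for the rank statement while basing the parallelism on the flawed scalar inequality leaves the central step unproved.

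A second, smaller gap: time-independence of the null space is not obtained by ``running $t_0$ downward and using continuity''. Constancy of the dimension together with parallelism at each fixed time does not prevent the null subspace of $T_{x_0}M$ from rotating as $t$ varies, and the product structure in the statement requires fixed factors $N$ and $P$ with time-dependent metrics. The paper needs a separate argument here: using $\ker h\subseteq\ker\phi$ and the evolution equation it shows $\partt v(x_0,\cdot)\in\ker h(x_0,\cdot)$ for suitable parallel extensions $v$, and then an ODE uniqueness argument for an orthonormal frame of the null space shows the subspace is literally constant in time. You should either reproduce these steps or invoke a vector-bundle (Hamilton-type) strong maximum principle for $\partt h=\lap h+\phi$ with $h,\phi\geq 0$ as a black box; the scalar argument you sketch does not close.
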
 
\begin{proof} 
  Let $0 \leq \si_1(x,t) \leq\si_2(x,t) \leq \ldots \leq \si_n(x,t)$
  be the eigenvalues of $h(x,t)$ and $\{ e_1(x,t) \}$ orthonormal
  eigenvectors. Assume that $\si_1(x_0,t_0) + \si_2(x_0,t_0) + \ldots
  + \si_k(x_0,t_0) >0$ at some point $x_0$ and some time $t_0$. Define
  a smooth function $\eta_{t_0}:M \to \R^+_0$ which is positive at
  $x_0$ and zero outside of $B_1(x_0,0)$ (the ball in $M$ of radius
  one with respect to $g_0$), and satisfies $\si_1(\cdot,t_0) +
  \si_2(\cdot,t_0) + \ldots + \si_k(\cdot,t_0) > \eta_{t_0}(\cdot).$
  Solve the Dirichlet problem:
  \begin{equation}
    \begin{split} &\partt \eta_i =    {} ^{g(t)} \lap 
      \eta_i \\
      &\eta_i(\cdot,t) |_{\boundary B_i(x_0,0)}=  0 
      \ \ \forall \ t \in [t_0,T) \\
      &\eta_i(\cdot,t_0)=  \eta_{t_0}(\cdot) 
    \end{split}
  \end{equation} 
  Using the estimates for $g$ we see that the solutions exist for all
  time and satisfy interior estimates independent of $i$ (see for
  example Theorem 10.1, chapter IV, \S 10 in \cite{LSU}). Thus we may
  take a subsequence to obtain a smooth solution $\eta: M \times
  [t_0,T) \to \R $ of the equation
  \begin{equation}
    \begin{split} 
      &\partt \eta=   {}^{g(t)} 
      \lap \eta \\
      &\eta(\cdot,t_0)=  \eta_{t_0}(\cdot)\ .
    \end{split}
  \end{equation}
  From the strong maximum principle, $\eta(\cdot,t) >0$ for all $t>
  t_0$. Also, the construction and the estimates on $g$ guarantee that
  $\sup_{(M \backslash B_i(x_0,0)) \times [t_0,S]} |\eta(\cdot,t)| \to
  0 $ as $i \to \infty$. for all $S < T$. We claim that
  $\si_1(\cdot,t) + \ldots + \si_k(\cdot,t) \geq e^{-at}\eta(\cdot,t)
  $ for all $t\geq t_0$. One proves first, that $\si_1(\cdot,t) +
  \ldots + \si_k(\cdot,t) - e^{-at}\eta(\cdot,t) + \ep
  e^{\rho^2(\cdot,t)(1+at) + at} \geq 0 $ for arbitrary small $\ep >0$
  and an appropriately chosen constant $a$, where here $\rho(x,t)=
  \dist(x,x_0,t)$ ($a>0 $ does not depend on $\ep$: $a$ depends on the
  constants in the statement of the Theorem). This is done by using
  the maximum principle. See for example the argument in the proof of
  Lemma 5.1 in \cite{Si6} for details. Now let $\ep $ go to zero. This
  implies $\si_1(\cdot,t) + \ldots + \si_k(\cdot,t) \geq \eta(\cdot,t)
  $ for all $t \geq t_0$ and hence $\si_1(\cdot,t) + \ldots +
  \si_k(\cdot,t) >0$ for all $t>t_0$. Thus 
  $$ dim (null (h(x,t)))=
  \max \{ i \in \{0, \ldots, n\} | \si_1(x,t) + \ldots + \si_i(x,t)= 0
  \}
  $$  
  is independent of $x \in M$ for all $t>t_0$ and a decreasing
  function in time. Hence $rank(h(x,t))$ is constant in space and
  time for some short time interval $ t_0 < t < t_0 + \de $ for any
  $t_0 \in [0,T)$. Now we let $v$ be a smooth vector field in space
  and time lying in the null space of $h$ (at each point in space and
  time). We can always construct such sections which have length one
  in a small neighbourhood, by defining it locally smoothly, and then
  multiplying by a cut-off function. We follow closely the proof of
  Lemma 8.2 of Hamilton (\cite{Ha86}) and Theorem 2.1 of
  \cite{Cao}. In the following we use the notation $\grad$ and $\lap$
  to refer to $ {}^{g(t)}\grad$ and $ {}^{g(t)}\lap$. Using $ h(v,v)
  \equiv 0$ we get
  \begin{equation}\label{ddth}
    \begin{split}
      0=\partt \Big(h(v,v)\Big) &=  \Big(\partt h^i_{\ j}\Big)v_iv^j +
      h^i_{\ j}\Big(\Big (\partt v_i\Big) v^j 
      + v_i\Big(\partt v^j\Big)\Big) \\
      &=  \Big(\partt h^i_{\ j}\Big)v^iv^j,
    \end{split}
  \end{equation} 
  since $h^i_{\ j}v_i= 0$ and $h^i_{\ j}v^j = 0$. Furthermore, since $
  h^i_{\ j} v_i v^j \equiv 0$ we get
  \begin{equation} \label{laph}
    \begin{split} 
      0 &= \lap( h^i_{\ j}  v_i  v^j) \\
      &=  (\lap h)^i_{\ j} v_i v^j + 2 \lap(v_i) 
      h^i_{\ j} v^j  \\
      &\ \   + 4 g^{kl} v^j\grad_k h^i_{\ j}  
      \grad_l v_i  +  2 g^{kl}  h^i_{\ j} \grad_k v^j 
      \grad_l v_i 
    \end{split}
  \end{equation} 
  The term $2 \lap(v_i) h^i_{\ j} v^j$ is once again zero, since
  $h^i_{\ j} v^j= 0$. Using this, \eqref{laph}, \eqref{ddth} and the
  evolution equation for $h$ we get
  \begin{equation}
    \begin{split}
      0 &=  \Big( \partt(h^i_{\ j}) -  (\lap h)^i_{\ j} - 
      \phi^i_{\ j}\Big)(v_iv^j)\\
      &=   4 g^{kl} v^j \grad_k h^i_{\ j}  \grad_l v_i  +  2 g^{kl}
      h^i_{\ j} 
      \grad_k v^j \grad_l v_i - \phi^i_{\ j}v_iv^j 
    \end{split}  
  \end{equation} 
  Now use  $$  v^j\grad_k h^i_{\ j}=  \grad_k(v^j 
  h^i_{\ j}) - h^i_{\ j} \grad_k v^j  = - h^i_{\ j} \grad_k v^j $$ to conclude  
  \begin{eqnarray} 2 g^{kl}  h^i_{\ j} 
    \grad_k v^j \grad_l v_i + \phi^i_{\ j}v_i v^j=  0 
    \label{maineqapp} \end{eqnarray} 
  Since $\phi(v,v) \geq 0$ (and 
  $h\geq 0$) we see that  $\phi^i_{\ j}v_i v^j=  0$. 
  That is, $v$ is also in the null space of $\phi$. 
  But then, \eqref{maineqapp} shows that $X_R(x,t): = \grad_{R} v(x,t)$ is 
  in the null space of  $h$ for any  vector $R \in T_x M $
  (choose orthonormal coordinates at $x$ at time $t$, so that 
  $\parti{}{x^1}(x):= R/\norm R \norm_{g(x,t)}$ and use this in
  equation \eqref{maineqapp}).  
  This shows that the 
  null space of $h$ is invariant under parallel transport 
  for each fixed time, as explained in the following for the
  readers convenience:
  \begin{itemize}
  \item[]
    Let $v_1(x), \ldots, v_k(x)$ be a smooth o.n.~basis for $null(h(x,t))$
    in a small spatial neighbourhood of $x_0$, and extend this to a
    smooth family $v_1, \ldots, v_n$ of vectors which is an o.n.~basis
    everywhere in a small spatial neighbourhood of $x_0$.
    Let $X_0\in T_{x_0}M$ satisfy $g(X_0,v_i(x_0)) = 0$ for all 
    $i \in \{k+1, \ldots, n\}$ and let $\gamma:[0,1]\rightarrow M$ be any
    smooth curve, starting in $x_0$ and whose image is contained in the
    neighbourhood of $x_0$ in question. Then parallel transport $X_0$  along
    $\gamma$. Call this vector field $X$.
    Write $X(\tau) = \sum_{i = 1}^n X^i(\tau)v_i(\gamma(\tau))$.
    We claim $ X(\tau)  = \sum_{i = 0}^{k} X^i(\tau) v_i(\gamma(\tau))$.
    Let $X^{\top}(\tau) =  \sum_{i = 1}^k X^i(\tau) v_i(\gamma(\tau)) $, and
    $X^{\perp}(\tau) =  \sum_{i = k+1}^n X^i(\tau) v_i(\gamma(\tau)) $.
    First note that for $i \in \{1, \ldots, k\}$, and $V$ the tangent
    vector field along $\gamma$:
    \begin{equation}
      g(\grad_V (X^{\perp}), v_i) 
      = V(g(X^{\perp}, v_i)) - g(X^{\perp}, \grad_V v_i)
      = 0
    \end{equation}
    in view of $ \grad_V v_i \in span\{ v_1, \ldots, v_k\}$
    and $X^{\perp}  \in span\{ v_{k +1}, \ldots, v_n\}$.
    Furthermore, for $j \in  \{ k+1, \ldots, n\}$ we have
    \begin{equation}
      \begin{split}
        g(\grad_V (X^{\perp}), v_j) &=  g(\grad_V (X- X^{\top}), v_j)\\
        &=  - g(\grad_V (X^{\top}), v_j)\\
        &= -g (\sum_{i=1}^k V(X^i)v_i,v_j) - g(\sum_{i=1}^k X^i \grad_V v_i,v_j)\\
        &= -\sum_{i=1}^k X^i g(\grad_V v_i,v_j)\\
        &= 0
      \end{split}
    \end{equation}
    in view of the fact that $\grad_V v_i \in  span\{ v_1, \ldots, v_k\}$.
    Hence $X^{\perp}$ is also parallel along $\gamma$. Since $X^\perp(0)=0$
    we have $X^\perp \equiv 0$.
  \end{itemize} 
  We have also shown that $ null(h) \subseteq null(\phi)$.  Let
  $v(x_0,s)$ for $s \in(t, t+\de)$ be smoothly dependent on time, and
  $ v(x_0,s) \in null(h(x_0,s))$ for each $s \in(t, t+\de)$.  Extend
  this vector at each time $s \in (t,t + \de)$ by parallel transport
  along geodesics emanating from $x_0$ to obtain a local smooth vector
  field $v(\cdot,\cdot)$ which satisfies $ v(x,s) \in null(h(x,s))$
  for all $x$ (in a small ball) and all $s \in (t,t+ \de)$. In
  particular,
$$\grad_iv \in null(h(x,s)) \ \ \text{and}\ \  
\lap v(x,s)=  (g^{kl}\grad_k \grad_l v)(x,s) \in null(h(x,s)).$$ 
Since $\grad_iv(x_0,s)=0$ we can compute
\begin{equation}
  \begin{split}  
    0=\partt \Big(h^i_{\ j} v_i\Big) &= \Big(h^i_{\ j} \partt v_i\Big)
    + \big(v_i (\lap
    h)^i_{\ j}\big) + v_i \phi^i_{\ j} \\
    &=  h^i_{\ j} \partt v_i  +  \lap (v_i h^i_{\ j}) +v_i\phi^i_{\ j} \\
    &= h^i_{\ j} \partt v_i
  \end{split}
\end{equation} 
where we have used that $ v \in 
null(\phi)$. Hence $\partt v(x_0,s) \in null(h(x_0,s))$. 
Assume that at time $s_0$ we have $null(h(x_0,s_0))=  
\R^k \subseteq \R^n=  T_{x_0} M$ and let $\{ e_1(t), 
\ldots, e_n(t) \}$ be a smooth (in time) o.n.~basis 
of vectors with $\{ e_1(t), \ldots, e_k(t) \}$ a 
smooth (in time) o.n.~basis of vectors of 
$null(h(x_0,t))$. Let $e_i^l(t):= \langle e_i(t), e^l(0)\rangle$,
where $\{ e^1(0), \ldots, e^n(0) \}$ refer to the standard basis vectors
of $\R^n$ and $\langle \cdot , \cdot \rangle$ is the standard inner
product on $\R^n$.
$\partt e_i(t) \in null(h(t))$ for all $i \in \{ 1, \ldots, k\}$ implies
$\partt e_i(t) = \sum_{j=  1}^k a_i^j(t)e_j(t)$ for some smooth functions
$a_i^j: [0, \infty) \to \R$, $i,j 
\in \{1, \ldots, k\}$. 
Then we have a system of ODEs ($l 
\in \{1, \ldots, n\}$, $i \in \{ 1, \ldots, k\}$) 
\begin{equation}
  \begin{split} 
    & \partt e_{i}^l (t)=  \sum_{j= 1}^k a_{i}^j(t)e_j^{l} (t) \\
    &  e_{i}^l (0)=  \de_i^l.  
  \end{split}
\end{equation} 
By assuming  $e_j^{l}(t)=  0$ 
for all $l \geq k +1$ we still have a solvable 
system, and hence the solution satisfies (by uniqueness) 
$e_j^{l}(t)=  0$ for all $l \geq k +1$. That is 
$\{ e_1(t), \ldots, e_k(t) \}$ remains in $\R^k$.   

$null(h(x_0,t))$ is a space which is invariant under 
parallel transport (from the argument above). Hence 
the de Rham splitting theorem (see \cite{deR})
says, $M $ splits 
{\bf isometrically at time $s$ } as  $ N(s) \oplus 
P(s)$ where $h(\cdot,s) = 0$ on $P(s)$ and $h(\cdot,s)>0$ on $N(s)$.
 We can do this it every time $s$. But the 
second part of the argument shows that $N(s)=  
N(s_0)$ for all $s$ and $P(s)=  P(s_0)$ for all 
$s$.  
\end{proof}  
\end{section}   

\begin{section}{An approximation result by V.\,Kapovitch/G.\,Perelman.}\label{B}
\noindent Let $(M_i,d_i,p_0)$ be a non-collapsing sequence of non-negatively
curved $n-$di\-men\-sio\-nal, smooth, complete manifolds without boundary
such that 
$(M^n_i,d_i,p_0) \to (X,d_X,0)$  as $i \to \infty$ 
(in the GH sense) where 
$X =CV$ is an Euclidean cone with non-negative curvature
over the metric space $(V,d_V)$ (with sectional curvature not less than 1 in
the sense of Alexandrov), and $(M_i,d_i,p_0)$ are smooth with
$\sec \geq 0$.
This is the situation examined in the introduction.
It is well known that the space of directions $\Sigma_{0}(X)$
of $(X,d_X)$ at $0$ is $(V,d_V)$: see Theorem 10.9.3
(here we have used that the tangent cone of $X$ at $0$ is equal to $X$, since
$X$ is a cone).
Now Theorem 5.1 of \cite{Kap1} says that
$\Sigma_{0}(X)$ is homeomorphic to $\Sigma_{p_0} M_i$  (for $i$ big enough)
which is isometric to the standard sphere $S^{n-1}$ since the $(M_i,g_i)$
are smooth manifolds.
That is $V$ is homeomorphic to $S^{n-1}$.
\end{section}
\end{appendix}    

\def\cprime{$'$} \def\cprime{$'$}
\providecommand{\bysame}{\leavevmode\hbox to3em{\hrulefill}\thinspace}
\providecommand{\MR}{\relax\ifhmode\unskip\space\fi MR }
% \MRhref is called by the amsart/book/proc definition of \MR.
\providecommand{\MRhref}[2]{%
  \href{http://www.ams.org/mathscinet-getitem?mr=#1}{#2}
}
\providecommand{\href}[2]{#2}


\begin{thebibliography}{10}

\bibitem{SB}
S.~Brendle, \emph{A generalization of hamilton's differential harnack
  inequality for the ricci flow}, J. Differential Geom. \textbf{82} (2009),
  207--227.

\bibitem{BuBuIv}
D.~Burago, Y.~Burago, and S.~Ivanov, \emph{A course in metric geometry},
  Graduate Studies in Mathematics, vol.~33, American Mathematical Society,
  Providence, RI, 2001.

\bibitem{CabezasRivasWilking11}
E.~Cabezas-Rivas and B.~Wilking, \emph{How to produce a ricci flow
  via cheeger-gromoll exhaustion}, {\tt arXiv:1107.0606v3}.

\bibitem{Cao3}
H-D. Cao, \emph{Limits of solutions to the {K}\"ahler-{R}icci flow}, J.
  Differential Geom. \textbf{45} (1997), no.~2, 257--272.

\bibitem{Cao}
H-D. Cao, \emph{On dimension reduction in the {K}\"ahler-{R}icci flow}, Comm.
  Anal. Geom. \textbf{12} (2004), no.~1-2, 305--320.

\bibitem{Cao2}
H-D. Cao, \emph{{Recent progress on {R}icci solitons}}, 2009, {\tt
  arXiv:0908.2006}.

\bibitem{CaNi}
J.~Carrilo and L.~Ni, \emph{Sharp logarithmic {S}obolev inequalities on
  gradient solitons and applications}, Comm. Ana. Geom. \textbf{17} (2010),
  no.~4, 1--33.

\bibitem{ChTa}
A.~Chau and L-F. Tam, \emph{On the simply connectedness of non-negatively
  curved {K}\"ahler manifolds and applications}, {\tt arXiv:0806.2457v1}.

\bibitem{ChCo}
J.~Cheeger and T.H. Colding, \emph{On the structure of spaces with {R}icci
  curvature bounded below. {I}}, J. Differential Geom. \textbf{46} (1997),
  no.~3, 406--480.

\bibitem{ChZh}
B-L. Chen and X-P. Zhu, \emph{Complete {R}iemannian manifolds with pointwise
  pinched curvature}, Invent. Math. \textbf{140} (2000), no.~2, 423--452.

\bibitem{ChHa}
B.~Chow and R.S. Hamilton, \emph{Constrained and linear {H}arnack inequalities
  for parabolic equations}, Invent. Math. \textbf{129} (1997), no.~2, 213--238.

\bibitem{ChLuNi}
B.~Chow, P.~Lu, and L.~Ni, \emph{Hamilton's {R}icci flow}, Graduate Studies in
  Mathematics, vol.~77, American Mathematical Society, Providence, RI, 2006.

\bibitem{deR}
G.~de~Rham, \emph{Sur la reductibilit\'e d'un espace de {R}iemann}, Comment.
  Math. Helv. \textbf{26} (1952), 328--344.

\bibitem{EH}
K.~Ecker and G.~Huisken, \emph{Interior estimates for hypersurfaces moving by
  mean curvature}, Invent. Math. \textbf{105} (1991), no.~3, 547--569.

\bibitem{FeIlKn}
M.~Feldman, T.~Ilmanen, and D.~Knopf, \emph{Rotationally symmetric shrinking
  and expanding gradient {K}\"ahler-{R}icci solitons}, J. Differential Geom.
  \textbf{65} (2003), no.~2, 169--209.

\bibitem{FeIlNi}
M.~Feldman, T.~Ilmanen, and L.~Ni, \emph{Entropy and reduced distance for
  {R}icci expanders}, J. Geom. Anal. \textbf{15} (2005), no.~1, 49--62.

\bibitem{GK}
L.~Guijarro and V.~Kapovitch, \emph{Restrictions on the geometry at infinity of
  nonnegatively curved manifolds}, Duke Math. J. \textbf{78} (1995), no.~2,
  257--276.

\bibitem{Ha82}
R.S.~Hamilton, \emph{Three-manifolds with positive {R}icci curvature}, J.
  Differential Geom. \textbf{17} (1982), no.~2, 255--306.

\bibitem{Ha86}
R.S.~Hamilton, \emph{Four-manifolds with positive curvature operator}, J.
  Differential Geom. \textbf{24} (1986), no.~2, 153--179.

\bibitem{Ha95a}
R.S.~Hamilton, \emph{A compactness property for solutions of the {R}icci flow}, Amer.
  J. Math. \textbf{117} (1995), no.~3, 545--572.

\bibitem{Ha95}
R.S.~Hamilton, \emph{The formation of singularities in the {R}icci flow}, Surveys in
  differential geometry, {V}ol.\ {II} ({C}ambridge, {MA}, 1993), Int. Press,
  Cambridge, MA, 1995, pp.~7--136.

\bibitem{Kap1}
V.~Kapovitch, \emph{Regularity of limits of noncollapsing sequences of
  manifolds}, Geom. Funct. Anal. \textbf{12} (2002), no.~1, 121--137.

\bibitem{LSU}
O.A.~Lady{\v{z}}enskaja, V.A. Solonnikov, and N.N. Ural{\cprime}ceva,
  \emph{Linear and quasilinear equations of parabolic type}, Translated from
  the Russian by S. Smith. Translations of Mathematical Monographs, Vol. 23,
  American Mathematical Society, Providence, R.I., 1967.

\bibitem{Ma}
M.~Li, \emph{Ricci expanders and type {III} {R}icci flow}, {\tt
  arXiv:1008.0711}.

\bibitem{NiKae}
L.~Ni, \emph{A new matrix {L}i-{Y}au-{H}amilton estimate for {K}\"ahler {R}icci
  flow}, {\tt arXiv:math/0502495v2}.

\bibitem{NiTa2}
L.~Ni and L-F. Tam, \emph{K\"ahler-{R}icci flow and the {P}oincar\'e-{L}elong
  equation}, Comm. Anal. Geom. \textbf{12} (2004), no.~1-2, 111--141.

\bibitem{PeUn}
G.~Perelman, \emph{Alexandrov spaces with curvatures bounded from below {II}},
  unpublished typed manuscript.

\bibitem{Pe}
G.~Perelman, \emph{The entropy formula for the {R}icci flow and its geometric
  applications}, {\tt arXiv:math/0211159}.

\bibitem{Sh}
W-X. Shi, \emph{Deforming the metric on complete {R}iemannian manifolds}, J.
  Differential Geom. \textbf{30} (1989), no.~1, 223--301.

\bibitem{Si6}
M.~Simon, \emph{Ricci flow of non-collapsed 3-manifolds whose {R}icci-curvature
  is bounded from below}, {\tt arXiv:0903.2142}.

\bibitem{Si4}
M.~Simon, \emph{Ricci flow of almost non-negatively curved three manifolds}, J.
  Reine Angew. Math. \textbf{630} (2009), 177--217.

\bibitem{Yokota08}
T.~ Yokota, \emph{Curvature integrals under the {R}icci flow on surfaces},
  Geom. Dedicata \textbf{133} (2008), 169--179.

\end{thebibliography}
\end{document}